\newcommand{\sk}{\vspace{0.1in}}
\newfont{\cyr}{wncyr10 scaled 1100}
\newfont{\cyrr}{wncyr9 scaled 1000}
\theoremstyle{plain}
\newtheorem{theorem}{Theorem}[section]
\newtheorem{introthm}{Theorem}
\newtheorem{introconj}{Conjecture}
\newtheorem{proposition}[theorem]{Proposition}
\newtheorem{lemma}[theorem]{Lemma}
\newtheorem{corollary}[theorem]{Corollary}
\theoremstyle{definition}
\newtheorem{conjecture}[theorem]{Conjecture}
\newtheorem{definition}[theorem]{Definition}
\newtheorem{assumption}[theorem]{Assumption}
\theoremstyle{remark}
\newtheorem{remark}[theorem]{Remark}
\newcommand{\Q}{\mathbf Q}
\newcommand{\Z}{\mathbf Z}
\newcommand{\R}{\mathbf R}
\newcommand{\C}{\mathbf C}
\DeclareMathOperator{\Hom}{Hom}
\DeclareMathOperator{\Gal}{Gal}
\DeclareMathOperator{\GL}{\rm GL}
\DeclareMathOperator{\M}{M}
\definecolor{Indigo}{rgb}{0.2,0.1,0.7}
\definecolor{Violet}{rgb}{0.5,0.1,0.7}
\definecolor{White}{rgb}{1,1,1}
\definecolor{Green}{rgb}{0.1,0.9,0.2}
\newcommand{\mat}[4]{\left(\begin{array}{cc}#1&#2\\#3&#4\end{array}\right)}
\newcommand{\smallmat}[4]{\bigl(\begin{smallmatrix}#1&#2\\#3&#4\end{smallmatrix}\bigr)}
\newcommand{\pwseries}[1]{[[#1]]}
\newcommand{\p}{\mathfrak p}
\newcommand{\cO}{{\mathcal O}}
\newcommand{\F}{\mathbf f}
\newcommand{\I}{\mathbb I}
\def\k{\kappa}
\begin{document}

\title{Big Heegner points and special values of $L$-series}
\author[F.~Castella and M.~Longo]{Francesc Castella and Matteo Longo}
\thanks{}

\begin{abstract}
In \cite{LV-MM}, Howard's construction of big Heegner points on modular curves 
was extended to general Shimura curves over the rationals. 
In this paper, we relate the higher weight specializations of the big Heegner points of \emph{loc.cit.}
in the definite setting to certain higher weight analogues 
of the Bertolini--Darmon theta elements \cite{BDmumford-tate}. As a consequence of this relation,
some of the conjectures in \cite{LV-MM} are deduced from recent results of Chida--Hsieh \cite{ChHs1}.
\end{abstract}

\address{Department of Mathematics, UCLA, Math Sciences Building 6363, Los Angeles, CA, USA}
\email{castella@math.ucla.edu}
\address{Dipartimento di Matematica, Universit\`a di Padova, Via Trieste 63, 35121 Padova, Italy}
\email{mlongo@math.unipd.it}

\subjclass[2000]{}
\keywords{}

\maketitle

\tableofcontents

\section*{Introduction}

Fix a prime $p\geq 5$ and an integer $N>0$ prime to $p$, and
let $f\in S_{k_0}(\Gamma_0(Np))$ be an ordinary $p$-stabilized newform of weight $k_0\geq 2$ and trivial nebentypus.
Fix embeddings $\imath_\infty:\overline{\Q}\hookrightarrow\C$ and $\imath_p:\overline{\Q}\hookrightarrow\C_p$,
let $L/\Q_p$ be a finite extension containing the image of the Fourier coefficients of $f$ under $\imath_p$,
and denote by $\cO_L$ its valuation ring. Let
\[
\mathbf{f}=\sum_{n=1}^\infty\mathbf{a}_nq^n\in\I\pwseries{q}
\]
be the Hida family passing through $f$. Here $\I$ is a finite flat extension of
$\cO_L\pwseries{T}$, which for simplicity in this Introduction it will be assumed to be $\cO_L\pwseries{T}$ itself.
The space $\mathcal{X}(\I):={\rm Hom}_{\rm cts}(\I,\overline{\Q}_p)$ of continuous $\cO_L$-algebra
homomorphisms $\I\rightarrow\overline{\Q}_p$ naturally contains $\Z$, by identifying every $k\in\Z$
with the homomorphism $\sigma_k:\I\rightarrow\overline{\Q}_p$ defined by $1+T\mapsto(1+p)^{k-2}$.
The formal power series $\mathbf{f}$ is then uniquely caracterized by the property that for 
any $k\in\Z_{\geq 2}$ (in the residue class of $k_0$ mod $p-1$) its ``weight $k$ specialization''
\[
\mathbf{f}_k:=\sum_{n=1}^{\infty}\sigma_k(\mathbf{a}_n)q^n
\]
gives the $q$-expansion of an ordinary $p$-stabilized newform $\mathbf{f}_k\in S_k(\Gamma_0(Np))$ with $\mathbf{f}_{k_0}=f$.

Let $K$ be an imaginary quadratic field of discriminant $-D_K<0$ prime to $Np$.
We write
\[
N=N^+N^-,
\]
where $N^+$ (resp. $N^-$) is the product of the prime factors of $N$
which are split (resp. inert) in $K$, and assume throughout that $N^-$ is
the square-free product of an \emph{odd} number of primes.

Following Howard's original construction \cite{howard-invmath}, the work
of the second-named author in collaboration with Vigni \cite{LV-MM}, \cite{LV-Pisa}, introduces
a system of ``big Heegner points'' $\mathcal{Q}_n$ attached to $\mathbf{f}$ and $K$,
indexed by the integers $n\geq 0$. Rather than cohomology classes in the big Galois representation
associated with $\mathbf{f}$ (as one obtains in \cite{howard-invmath}), in our setting
these points gives rise to an element $\Theta^{\rm alg}_\infty(\mathbf{f})\in\I\pwseries{\Gamma_\infty}$
in the completed group ring for the Galois group of the anticyclotomic $\Z_p$-extension of $K$.

The construction of $\Theta_\infty^{\rm alg}(\mathbf{f})$ is reminiscent of the construction
by Bertolini--Darmon \cite{BDmumford-tate} of theta elements $\theta_\infty(f_E)\in\Z_p\pwseries{\Gamma_\infty}$
attached to an ordinary elliptic curve $E/\Q$ of conductor $Np$, where $f_E\in S_2(\Gamma_0(Np))$ is the associated
newform, and in fact if $f_E=\sigma_2(\mathbf{f})$,
it is easy to show that
\[
\sigma_2(\Theta^{\rm alg}_\infty(\mathbf{f}))=\theta_\infty(f_E)
\]
directly from the constructions. In particular, in light of Gross's special value formula \cite{Gross-Special-Values}
(as extended by several authors), one deduces from this equality that $\sigma_2(\Theta^{\rm alg}_\infty(\mathbf{f}))$
interpolates certain Rankin--Selberg $L$-values.

More generally, it was suggested in \cite{LV-MM} that the
specializations of $\Theta^{\rm alg}_\infty(\mathbf{f})$ at any even integer $k\geq 2$ should yield an
interpolation of the central values $L_K(\mathbf{f}_k,\chi,k/2)$ for the Rankin--Selberg
convolution of $\mathbf{f}_k$ with the 
theta series attached to Hecke characters $\chi$ of $K$ of $p$-power conductor. This is the main
question addressed in this paper.

Define
\[
L_p^{\rm alg}(\mathbf{f}/K):=\Theta^{\rm alg}_\infty(\mathbf{f})\cdot\Theta^{\rm alg}_\infty(\mathbf{f})^*\in\I[[\Gamma_\infty]],
\]
where $*$ denotes the involution on $\I[[\Gamma_\infty]]$ given by $\gamma\mapsto\gamma^{-1}$
for $\gamma\in\Gamma_\infty$. We think of $L_p^{\rm alg}(\mathbf{f}/K)$ as a function
of the variables $k$ and $\chi:\Gamma_\infty\rightarrow\C_p^\times$ by setting
\[
L_p^{\rm alg}(\mathbf{f}/K;k,\chi):=(\chi\circ\sigma_k)(L_p^{\rm alg}(\mathbf{f}/K)).
\]
The following is a somewhat weakened version of
\cite[Conj.~9.14]{LV-MM} 
(cf. Conjecture~\ref{conj:non-0} below).

\begin{introconj}\label{conj:9.14}
Let $k\geq 2$ be an even integer, and
$\chi:\Gamma_\infty\rightarrow\C_p^\times$ be a finite order character. Then
\[
L_p^{\rm alg}(\mathbf{f}/K;k,\chi)\neq 0\;\Longleftrightarrow\;L_K(\mathbf{f}_k,\chi,k/2)\neq 0.
\]
\end{introconj}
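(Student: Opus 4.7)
The plan is to reduce Conjecture~\ref{conj:9.14} to the special value formula of Chida--Hsieh \cite{ChHs1} by means of an explicit comparison between the weight-$k$ specialization $\sigma_k(\Theta^{\rm alg}_\infty(\mathbf{f}))$ and a higher-weight analogue of the Bertolini--Darmon theta element attached to $\mathbf{f}_k$.

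First I would introduce, for each even integer $k\geq 2$, a theta element $\theta_\infty(\mathbf{f}_k)\in\cO_L\pwseries{\Gamma_\infty}$ following the blueprint of \cite{ChHs1}: one transfers $\mathbf{f}_k$ via the Jacquet--Langlands correspondence to a quaternionic automorphic form on the definite quaternion algebra of discriminant $N^-$ with coefficients in the $(k-2)$-nd symmetric power representation, and pairs it against the natural system of CM points of conductor $p^n$ on the corresponding Shimura set. The crucial step is then to establish an identity of the form
\[
\sigma_k(\Theta^{\rm alg}_\infty(\mathbf{f}))\;=\;c_k\cdot\theta_\infty(\mathbf{f}_k)
\]
in $\cO_L\pwseries{\Gamma_\infty}$, for some $\chi$-independent factor $c_k\neq 0$; this generalizes the tautological equality $\sigma_2(\Theta^{\rm alg}_\infty(\mathbf{f}))=\theta_\infty(f_E)$ recalled above. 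Concretely, one unwinds the construction of \cite{LV-MM}: the classes $\mathcal{Q}_n$ arise from CM points on a tower of Shimura curves, lifted through Hida's ordinary projector acting on the inverse system of étale cohomology groups. Applying $\sigma_k$ then corresponds, by the control theorem for Hida families on Shimura curves, to projecting onto the $\mathbf{f}_k$-isotypic component of weight-$k$ étale cohomology, and the identification with $\theta_\infty(\mathbf{f}_k)$ should follow from the compatibility between the étale and the automorphic realizations of the Jacquet--Langlands transfer.

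Second, once this specialization identity is in place, the definition of $L_p^{\rm alg}(\mathbf{f}/K)$ and of the involution $*$ immediately gives
\[
L_p^{\rm alg}(\mathbf{f}/K;k,\chi)\;=\;c_k^2\cdot\theta_\infty(\mathbf{f}_k)(\chi)\cdot\theta_\infty(\mathbf{f}_k)(\chi^{-1}),
\]
and the main theorem of \cite{ChHs1} evaluates the right-hand side as an explicit nonzero multiple of $L_K(\mathbf{f}_k,\chi,k/2)/\Omega$ for a nonzero complex period $\Omega$ depending only on $\mathbf{f}_k$. The equivalence claimed in Conjecture~\ref{conj:9.14} then drops out.

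The main obstacle is the specialization identity in the first step. In weight $2$ there is essentially nothing to do, but at higher weight one must reconcile two a priori different realizations of $\mathbf{f}_k$: the universal ordinary étale cohomology of Hida (through which $\Theta^{\rm alg}_\infty(\mathbf{f})$ is defined) and the space of ${\rm Sym}^{k-2}$-valued quaternionic automorphic forms (into which $\mathbf{f}_k$ is transferred for the definition of $\theta_\infty(\mathbf{f}_k)$). Producing a canonical, integrally normalized comparison between these two realizations that intertwines the CM-point constructions on both sides -- and tracking the constants precisely enough to verify that the proportionality factor $c_k$ is a unit (or at worst a nonzero element) in $\overline{\Q}_p$ -- is the technical heart of the argument, and is where one expects the bulk of the work to lie.
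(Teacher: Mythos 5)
Your high-level plan matches the paper's exactly: introduce the Chida--Hsieh theta elements $\theta_\infty(\mathbf{f}_k)$, prove a specialization identity $\sigma_k(\Theta^{\rm alg}_\infty(\mathbf{f}))=c_k\cdot\theta_\infty(\mathbf{f}_k)$ with $c_k\neq 0$ (Theorem~\ref{introthm:B} in the paper), square both sides, and then feed the right-hand side into the Chida--Hsieh interpolation formula (Theorem~\ref{thm:p-adicL}). You also correctly identify that the whole weight of the argument sits on the specialization identity, and that the nonvanishing of the proportionality factor must be tracked.

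However, the mechanism you sketch for the specialization identity does not match the paper and would not straightforwardly apply here. You describe the big Heegner points as "lifted through Hida's ordinary projector acting on the inverse system of \'etale cohomology groups" and propose to specialize via "the control theorem for Hida families on Shimura curves" together with a compatibility between the "\'etale and the automorphic realizations of the Jacquet--Langlands transfer." That is the geometry of Howard's \emph{indefinite} construction, where the Heegner points live in $H^1_{\rm \acute et}$ of a tower of modular or Shimura curves and produce Galois cohomology classes. In the \emph{definite} setting of \cite{LV-MM} and this paper, the ``Shimura curves'' $\widetilde X_m$ are genus-zero curves (i.e.\ essentially finite Shimura sets), so there is no degree-one \'etale cohomology carrying the Galois representation; the big Heegner points land instead in $H^0$, namely in the tower of Picard modules $\mathbb J_m$, and $\Theta^{\rm alg}_\infty(\mathbf{f})$ is an element of $\I\pwseries{\Gamma_\infty}$, not a cohomology class. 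Consequently the bridge the paper actually builds is different: it constructs a family $\Phi\in S(U_0,\mathscr{D}_\I)$ of \emph{measure-valued quaternionic automorphic forms} whose specializations $\Phi_\k=\lambda_\k\phi_{\mathbf f_\k}$ are the Jacquet--Langlands lifts (Proposition~\ref{CT} and Corollary~\ref{prop:eta}, where the multiplicity-one hypothesis and a Nakayama argument give freeness of rank one), and it pairs $\Phi$ against the Picard modules via the self-duality $\eta_\I$ of \eqref{eq:eta} to get the normalized isomorphism $\eta:\mathbb J\simeq\I$ used in the definition of $\Theta^{\rm alg}_\infty(\mathbf{f})$ (Corollary~\ref{cor:eta}). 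The weight-$k$ specialization is then computed by integrating $d\Phi(P_{p^{n+1}}^\sigma)$ against $x\mapsto x^{k-2}$ (which extracts the $Y^{k-2}$-coefficient of $\Phi_\k$) and matching this with the explicit formula for $\vartheta_n^{[1-k/2]}(\mathbf f_\k)$ from Remark~\ref{rem:1-k/2} and Definition~\ref{def:theta}, evaluated at enough characters $\chi$ of infinity type $(1-k/2,k/2-1)$. So: right architecture and right target identity, but the technical route you propose (étale cohomology + control on Shimura curves + étale/automorphic JL compatibility) should be replaced by the measure-valued-form/duality picture if you want to reconstruct the actual proof in the definite case.
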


The main result of this paper is a proof of Conjecture~\ref{conj:9.14} in many cases,
and holds  under the ``multiplicity 1'' Assumption~\ref{mult} below (which was already made in \cite{LV-MM} for the
construction of $\Theta_\infty^{\rm alg}(\mathbf{f})$).

\begin{introthm}\label{introthm:A}
Let $k\geq 2$ be an even integer, and let
$\chi:\Gamma_\infty\rightarrow\C_p^\times$ be a finite order character.
Then
\[
L_p^{\rm alg}(\mathbf{f}/K;k,\chi)=
\lambda_k^2\cdot
C_p(\mathbf{f}_k,\chi)\cdot E_p(\mathbf{f}_k,\chi)\cdot\frac{L_K(\mathbf{f}_k,\chi,k/2)}{\Omega_{\mathbf{f}_k,N^-}},
\]
where $\lambda_k$ and $C_p(\mathbf{f}_k,\chi)$ are nonzero constants,
$E_p(\mathbf{f}_k,\chi)$ is a $p$-adic multiplier, and $\Omega_{\mathbf{f}_k,N^-}$ is Gross's period.
In particular, Conjecture~\ref{conj:9.14} holds.
\end{introthm}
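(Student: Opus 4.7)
The plan is to establish a weight-$k$ specialization formula of the form
\[
\sigma_k\bigl(\Theta_\infty^{\rm alg}(\mathbf{f})\bigr) = \lambda_k\cdot\theta_\infty(\mathbf{f}_k),
\]
where $\theta_\infty(\mathbf{f}_k)\in\cO_L\pwseries{\Gamma_\infty}$ is the weight-$k$ theta element attached to $\mathbf{f}_k$ by Chida--Hsieh \cite{ChHs1}, and $\lambda_k\in\overline{\Q}_p^\times$ is an explicit nonzero scalar encoding the normalizations of the pairings used on each side. Once such a formula is available, the theorem follows quickly: applying the involution $*$ and multiplying yields
\[
\sigma_k\bigl(L_p^{\rm alg}(\mathbf{f}/K)\bigr) = \lambda_k^2\cdot\theta_\infty(\mathbf{f}_k)\cdot\theta_\infty(\mathbf{f}_k)^*,
\]
and evaluating at $\chi$, the higher-weight Gross-type formula of \cite{ChHs1} identifies the right-hand side with $\lambda_k^2\cdot C_p(\mathbf{f}_k,\chi)\cdot E_p(\mathbf{f}_k,\chi)\cdot L_K(\mathbf{f}_k,\chi,k/2)/\Omega_{\mathbf{f}_k,N^-}$; in particular $\lambda_k\neq 0$ together with the $p$-adic multiplier not vanishing on finite-order characters will imply Conjecture~\ref{conj:9.14}.

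The first and most substantial step is therefore to revisit the construction of the big Heegner points $\mathcal{Q}_n$ from \cite{LV-MM}. They are built as compatible families inside a Hida-theoretic coefficient system on the (finite) double coset set of level $N^+p^n$ for the definite quaternion algebra ramified exactly at the primes dividing $N^-$ and $\infty$, arising as sums over the Galois orbit of a fixed CM point. Applying $\sigma_k$ corresponds, at the level of coefficients, to a specialization from this big coefficient module onto a module of weight-$k$ polynomial functions, inside which the Chida--Hsieh Gross points naturally live. I would make this comparison explicit, verify that the image of $\mathcal{Q}_n$ is the weight-$k$ Gross point weighted by the correct test vector at $p$, and compute the proportionality constant $\lambda_k$ coming from the choices of optimal embeddings and the ordinary projector.

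Having identified the specializations at each finite level $n$, the next step is to promote these to an equality of measures on $\Gamma_\infty$. The norm-compatibility of the $\mathcal{Q}_n$'s is exactly what enters the definition of $\Theta_\infty^{\rm alg}(\mathbf{f})$ in \cite{LV-MM}, and it matches the analogous compatibility used by Chida--Hsieh in assembling $\theta_\infty(\mathbf{f}_k)$; passage to the inverse limit and an approximation argument in $\I\pwseries{\Gamma_\infty}$ then upgrade the finite-level identities to the desired identity of elements of $\cO_L\pwseries{\Gamma_\infty}$. With the specialization formula in place, the theorem reduces to invoking the main result of \cite{ChHs1}.

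The main obstacle will be the specialization formula itself. The big Heegner point construction proceeds through Hida theory on Shimura curves and uses a limit process (via the ordinary idempotent) that obscures the direct link with the classical Gross points appearing in \cite{ChHs1}. Pinning down the comparison requires, on the one hand, an explicit description of the weight-$k$ specialization map on the Hida-theoretic coefficient module and a verification that it intertwines the Hecke and Galois actions used on the two sides, and on the other hand, a careful matching of the periods and test vectors so that the resulting constant $\lambda_k$ is nonzero and of a tractable shape. The multiplicity-one Assumption~\ref{mult} is what guarantees the rigidity needed to perform these identifications on the relevant weight-$k$ Hecke eigenspace; once the normalizations have been fixed, the passage from the specialization formula to the full statement of Theorem~\ref{introthm:A} is essentially formal.
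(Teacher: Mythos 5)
Your high-level reduction is exactly the paper's: prove the specialization formula $\sigma_k(\Theta_\infty^{\rm alg}(\mathbf{f}))=\lambda_k\cdot\theta_\infty(\mathbf{f}_k)$ (this is Theorem~\ref{introthm:B} = Proposition~\ref{prop:higher}), then square, evaluate, and invoke the Chida--Hsieh interpolation (Theorem~\ref{thm:p-adicL}). Where the proposal goes astray is in the plan for the specialization formula itself. You propose to ``verify that the image of $\mathcal{Q}_n$ is the weight-$k$ Gross point weighted by the correct test vector at $p$'' by specializing a big coefficient module containing the big Heegner points. But the module $\mathbb{D}^\dagger$ in which the $\mathcal{Q}_n$ live is built from the weight-$2$ divisor groups $D_m$ tensored with $\I$; it carries no weight-$k$ coefficient structure that one could specialize, and for $k>2$ the image of $\mathcal{Q}_n$ under $\sigma_k$ is a $p$-adic limit with no \emph{a priori} classical meaning. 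A direct ``make the comparison explicit at weight $k$'' approach therefore stalls at the first step.

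The paper circumvents this by transferring the variable-weight structure to the dual (test vector) side rather than the point side. Under Assumption~\ref{mult}, the module $S(U_0,\mathscr{D}_\I)$ of \emph{measure-valued} quaternionic forms is $\I$-free of rank one (Corollary~\ref{prop:eta}); a generator $\Phi$ both fixes the normalization of $\eta:\mathbb{J}\simeq\I$ via the duality pairing (Corollary~\ref{cor:eta}) and specializes at each $\k$ to $\Phi_\k=\lambda_\k\cdot\phi_{\F_\k}$ --- it is this interpolation defect, not ``choices of optimal embeddings and the ordinary projector,'' that produces $\lambda_\k$. The comparison $\k(\Theta_\infty^{\rm alg}(\mathbf{f}))=\mathcal{L}_\infty^{\rm an}(\mathbf{f}/K;\k)$ is then proved \emph{only for weight-$2$} arithmetic primes (Proposition~\ref{lem:alg-wt2}), where one can trace directly through the corestriction and twisting in the big Heegner point construction, and is extended to all $\k$ by continuity and Zariski density of weight-$2$ primes. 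Finally, at weight $k$, the specialization map $\rho_\k$ on $\mathscr{D}$ extracts the $Y^{k-2}$-coefficient of $\Phi_\k$ at the weight-$2$ Gross points, which by Remark~\ref{rem:1-k/2} is precisely (a scalar multiple of) the Chida--Hsieh theta element $\theta_n^{[1-k/2]}(\mathbf{f}_\k)$. You would need to supply the measure-valued family $\Phi$, the duality normalization of $\eta$, and the weight-$2$-then-density argument to make your outline into a proof.
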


In fact, we prove that a similar interpolation property holds for all characters $\chi:\Gamma_\infty\rightarrow\C_p^\times$
corresponding to Hecke characters of $K$ of infinity type $(m,-m)$ with $-k/2<m<k/2$, for which the sign in the functional
equation for $L_K(\mathbf{f}_k,\chi,s)$ is still $+1$. As we note in Sect.~\ref{sec:main}, in addition to establishing
\cite[Conj.~9.14]{LV-MM} in the cases of higher weight and trivial nebentypus,
the methods of this paper also yield significant progress on the ostensibly deeper
conjecture \cite[Conj.~9.5]{LV-MM}, which is an analogue in our setting of Howard's
``horizontal nonvanishing conjecture'' \cite[Conj.~3.4.1]{howard-invmath}.

Theorem~\ref{introthm:A} is in the same spirit as the main result of \cite{Cas}, where
the higher weight specializations of Howard's big Heegner points and related to
the $p$-adic \'etale Abel--Jacobi images of higher dimensional Heegner cycles. In both cases, the specializations of
the respective big Heegner points at integers $k>2$ is mysterious \emph{a priori}, since they are obtained
 as $p$-adic limits of points constructed in weight $2$, but nonetheless one shows that they inherit a
connection  to classical objects (namely, algebraic cycles and special values of $L$-series, respectively).


Our strategy for proving Theorem~\ref{introthm:A} relies on the construction of an intermediate
object, a two-variable $p$-adic $L$-function denoted $L_p^{\rm an}(\mathbf{f}/K)$, allowing us to
bridge a link between the higher weight specializations of $\Theta^{\rm alg}_\infty(\mathbf{f})$ and the special values
$L_K(\mathbf{f}_k,\chi,k/2)$.

Indeed, extending the methods of \cite{BDmumford-tate} to higher weights, the work of Chida--Hsieh \cite{ChHs1}
produces a higher weight analogue $\theta_\infty(\mathbf{f}_k)\in\Z_p[[\Gamma_\infty]]$ of the Bertolini--Darmon theta elements,
giving rise to an anticyclotomic $p$-adic $L$-function
$L_p^{\rm an}(\mathbf{f}_\k/K):=\theta_\infty(\mathbf{f}_k)\cdot\theta_\infty(\mathbf{f}_k)^*$
satisfying
\[
L_p^{\rm an}(\mathbf{f}_k/K)(\chi)=
C_p(\mathbf{f}_k,\chi)\cdot E_p(\mathbf{f}_k,\chi)\cdot\frac{L_K(\mathbf{f}_k,\chi,k/2)}{\Omega_{\mathbf{f}_k,N^-}},
\]
for all finite order characters $\chi:\Gamma_\infty\rightarrow\C_p^\times$, 
where $C_p(\mathbf{f}_k,\chi)$, $E_p(\mathbf{f}_k,\chi)$, and $\Omega_{\mathbf{f}_k,N^-}$ are as above. 
The proof of Theorem~\ref{introthm:A} is thus an immediate consequence of the following.

\begin{introthm}\label{introthm:B}
Let $k\geq 2$ be an even integer. Then
\[
\sigma_k(\Theta_\infty^{\rm alg}(\mathbf{f}))=\lambda_k\cdot
\theta_\infty(\mathbf{f}_k),
\]
where $\lambda_k$ is a nonzero constant.
\end{introthm}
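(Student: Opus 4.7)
The plan is to prove the equality one finite level at a time in the anticyclotomic tower. Writing $\Theta_\infty^{\rm alg}(\mathbf{f}) = \invlim_n \Theta_n^{\rm alg}(\mathbf{f})$ and $\theta_\infty(\mathbf{f}_k) = \invlim_n \theta_n(\mathbf{f}_k)$ in the respective completed group rings, it suffices to check
\[
\sigma_k\bigl(\Theta_n^{\rm alg}(\mathbf{f})\bigr) \;=\; \lambda_k\cdot\theta_n(\mathbf{f}_k) \qquad \text{in } \cO_L[\Gamma_n]
\]
for every $n\geq 0$, with $\lambda_k\in L^\times$ depending only on $k$ (and not on $n$ or on any character $\chi$).

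To do this I would first unwind the definition of $\Theta_n^{\rm alg}(\mathbf{f})$ from \cite{LV-MM}: it is built by pairing the big Heegner point $\mathcal{Q}_n$ (and its $\Gamma_n$-translates) against the Hida-theoretic generator of the $\mathbf{f}$-isotypic quotient of Hida's universal ordinary module on the definite Shimura set attached to the quaternion algebra of discriminant $N^-$. I would then invoke Hida's control theorem in the quaternionic setting (via Jacquet--Langlands) to identify, after localization at the maximal ideal attached to $\mathbf{f}_k$, the weight-$k$ specialization of this universal module with the ordinary part of the space of $\mathrm{Sym}^{k-2}(\cO_L^2)$-valued automorphic forms on the definite Shimura set at level $\Gamma_1(N^+p)\cap\Gamma_0(N^-)$. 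Under this identification, $\sigma_k$ sends the Hida generator for $\mathbf{f}$ to a nonzero multiple of the $\mathrm{Sym}^{k-2}$-valued vector attached to the ordinary $p$-stabilization $\mathbf{f}_k$, and sends $\mathcal{Q}_n$ to a Gross/CM point of conductor $p^n$ (possibly twisted by a fixed $p$-adic unit arising from the normalization used in the big Heegner tower).

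Next I would compare with the Chida--Hsieh theta element $\theta_n(\mathbf{f}_k)$ of \cite{ChHs1}, which by definition has exactly the same shape but starts from $\mathbf{f}_k$ at tame level $N^+$ paired against Gross points of conductor $p^n$. The ratio of the two pairings is controlled by (a) the explicit relation between $\mathbf{f}_k$ at tame level $N^+$ and its $p$-stabilization at level $N^+p$, which is a rational expression in $\mathbf{a}_p(\mathbf{f}_k)$ and $p^{k-1}$, and (b) a matching of the two Gross-point systems, which should be essentially built into the constructions of both sides. Multiplicity one (Assumption~\ref{mult}) ensures the relevant $\mathbf{f}_k$-eigenspace is one-dimensional, so that the resulting ratio $\lambda_k$ is a well-defined scalar depending only on $k$, and it is nonzero since the Hida generator is nonzero on the ordinary line.

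The main obstacle is the careful implementation of the control theorem for the universal ordinary module combined with the precise tracking of how the big Heegner points specialize: one needs to verify that $\sigma_k(\mathcal{Q}_n)$ really does coincide, up to a unit that is independent of $n$, with the Gross point used in \cite{ChHs1}. This requires a delicate comparison of the level structures at $p$ on both sides and a clean analysis of the $p$-stabilization step, so as to guarantee that the discrepancy factor absorbs into a single $n$-independent scalar $\lambda_k$. Once these compatibilities are in place, the equality at every finite level, and hence in the limit, follows.
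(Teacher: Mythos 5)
Your outline captures the general shape of the argument but glosses over the most delicate step, and in one place misidentifies the source of the constant $\lambda_k$.

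The central difficulty is that for $k>2$ the Chida--Hsieh element $\theta_n(\mathbf{f}_k)$ is \emph{not} obtained by simply evaluating $\phi_{\F_\k}$ at Gross points of conductor $p^n$: its defining formula (Definition~\ref{def:theta}) involves the pairing $\langle\rho_k(Z_p^{(n)})\mathbf{v}_m^*,\phi_{\F_\k}(a\cdot\varsigma^{(n)})\rangle_k$ against a specific vector in $\mathscr{L}_k$. Your proposal never explains how the weight-$k$ specialization of the big Heegner point, which lives in a divisor module $\mathbb{D}^\dagger$ twisted by the critical character $\Theta^{-1}$, produces precisely this pairing. The paper bridges this gap via an intermediate object $\mathcal{L}_n^{\rm an}(\mathbf{f}/K;\k)$, the measure-valued family $\Phi$ integrated against $\k(x)$ over CM points, which is visibly continuous in $\k$; the weight-$2$ equality $\k(\Theta_\infty^{\rm alg}(\mathbf{f}))=\mathcal{L}_\infty^{\rm an}(\mathbf{f}/K;\k)$ is established by a delicate unwinding of the big Heegner tower (Proposition~\ref{lem:alg-wt2}), and is then extended to all $\k$ by continuity. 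The actual matching with Chida--Hsieh is a pointwise statement, not a level-by-level one: one evaluates both sides at the $p$-adic avatars of Hecke characters of infinity type $(m,-m)$ with $m=-(k/2-1)$, and the key observation (Remark~\ref{rem:1-k/2}) is that for this particular $m$ the pairing $\langle\rho_k(Z_p^{(n)})\mathbf{v}_m^*,\cdot\rangle_k$ collapses, up to the constant $\delta_K^{k/2-1}$, to extracting the coefficient of $Y^{k-2}$ — which is exactly what $\int_{\Z_p^\times\times\Z_p}\k(x)\,d\Phi$ computes via the specialization formula $(\ref{esp})$. This coefficient-extraction coincidence is the crux of the argument, and it is absent from your outline: ``matching of the two Gross-point systems'' does not account for the polynomial-coefficient structure that must be reconciled.

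Two further gaps. First, the constant $\lambda_k$ does not come from comparing $\mathbf{f}_k$ at tame level with its $p$-stabilization — both sides of the identity are built from $p$-stabilized data. It arises from Corollary~\ref{prop:eta}: the module $S(U_0,\mathscr{D}_\I)$ is $\I$-free of rank one under Assumption~\ref{mult}, and a choice of generator $\Phi$ specializes as $\Phi_\k=\lambda_\k\cdot\phi_{\F_\k}$ where $\phi_{\F_\k}$ is the $p$-adically normalized Jacquet--Langlands transfer; the full constant in the theorem is $\lambda_\k\cdot\delta_K^{-(k-2)/2}$. Second, the claim that ``$\sigma_k(\mathcal{Q}_n)$ coincides with the Gross point of conductor $p^n$'' hides the fact that $\mathcal{Q}_n$ is built as the limit $\varprojlim_m U_p^{-m}\cdot\mathcal{P}_{p^n,m}$, with the conductor of the underlying CM points dropping as the level $p^m$ rises; tracking this two-index compatibility through the specialization is precisely the content of equation $(\ref{eq:reduction})$ and the computation $(\ref{pchi})$, and needs to be made explicit rather than deferred as an ``obstacle.''
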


Our construction of $L_p^{\rm an}(\mathbf{f}/K)$ is based on the $p$-adic Jacquet--Langlands correspondence in $p$-adic families,
and the constant $\lambda_k$ in Theorem~\ref{introthm:B} in an ``error term'' arising in part 
from the interpolation of the automorphic forms associated with the different forms $\mathbf{f}_k$ in the family. By construction,
$L_p^{\rm an}(\mathbf{f}/K)$ thus interpolates the $p$-adic $L$-functions $L_p^{\rm an}(\mathbf{f}_k/K)$ of \cite{ChHs1},
while the relation between $L_p^{\rm an}(\mathbf{f}/K)$ and $\Theta_\infty^{\rm alg}(\mathbf{f})$ can be easily established
by tracing through the construction of big Heegner points, giving rise to the proof of Theorem~\ref{introthm:B}.

The organization of this paper is the following. In Sect.~\ref{sec:bigHPs}, we briefly recall
the construction of big Heegner points in the definite setting, while in Sect.~\ref{sec:L-values}
we recall the construction of the higher weight theta elements $\theta_\infty(\mathbf{f}_k)$ of Chida--Hsieh.
Then, making use of the Jacquet--Langlands correspondence in $p$-adic families in the form
discussed in Sect.~\ref{sec:families}, in Sect.~\ref{sec:main} we construct the two-variable $p$-adic $L$-function
$L_p^{\rm an}(\mathbf{f}/K)$, and give the proof of our main results.

Finally, we conclude this Introduction by noting that some of the ideas and constructions
in this paper play an important role in a forthcoming work of the authors in collaboration with C.-H.~Kim \cite{CKL},
where we consider anticyclotomic analogues of the results of \cite{EPW} on the variation of
Iwasawa invariants in Hida families.
\sk

\emph{Acknowledgements.} The authors would like to thank Ming-Lun Hsieh for several helpful communications 
related to this work. During the preparation of this paper, F.C. was partially supported by
Grant MTM 20121-34611 and by H.~Hida's NSF Grant DMS-0753991, and M.L. was supported by PRIN 2010--11
``Arithmetic Algebraic Geometry and Number Theory'' and by PRAT 2013 ``Arithmetic of Varieties over Number Fields''.

\section{Big Heegner points}\label{sec:bigHPs}

As in the Introduction, let $N=N^+N^-$ be a positive integer prime to $p\geq 5$,
where $N^-$ is the square-free product of an odd number of primes, and let $K/\Q$ be an imaginary
quadratic field of discriminant $-D_K<0$ prime to $Np$ such that every prime factor of $pN^+$ (resp. $N^-$)
splits (resp. is inert) in $K$.

In this section, we briefly recall from \cite{LV-MM} the construction of big Heegner points
in the definite setting. There is some flexibility in a number of the choices made in
the construction of \emph{loc.cit.}, and here we make specific choices following \cite{ChHs1}.

\subsection{Definite Shimura curves}\label{subsec:Sh}

Let $B/\Q$ be the definite quaternion algebra of discriminant $N^-$. We fix once and for all an embedding of $\Q$-algebras
$K\hookrightarrow B$, and thus identity $K$ with a subalgebra of $B$. Denote by $z\mapsto\overline{z}$
the nontrivial automorphism of $K/\Q$, and choose a basis $\{1,j\}$ of $B$ over $K$ with
\begin{itemize}
\item $j^2=\beta\in\Q^\times$ with $\beta<0$,
\item $jt=\bar tj$ for all $t\in K$,
\item $\beta\in (\Z_q^\times)^2$ for $q\mid pN^+$, and $\beta\in\Z_q^\times$ for $q\mid D_K$.
\end{itemize}
Fix 
a square-root $\delta_K=\sqrt{-D_K}$, and
define $\boldsymbol{\theta}\in K$ by
\[
\boldsymbol{\theta}:=\frac{D'+\delta_K}{2},\quad\textrm{where}\;
D'=\left\{
\begin{array}{ll}
D_K&\textrm{if $2\nmid D_K$,}\\
D_K/2&\textrm{if $2\vert D_K$.}
\end{array}
\right.
\]
For each prime $q\mid pN^+$,
define $i_q:B_q:=B\otimes_\Q\Q_q \simeq \M_2(\Q_q)$ by
\[
i_q(\boldsymbol{\theta})=\mat{\mathrm{Tr}(\boldsymbol{\theta})}{-\mathrm{Nm}(\boldsymbol{\theta})}10,
\quad\quad
i_q(j)=\sqrt\beta\mat{-1}{\mathrm{Tr}(\boldsymbol{\theta})}01,
\]
where $\mathrm{Tr}$ and $\mathrm{Nm}$ are the reduced trace and reduced norm maps on $B$, respectively.
For each prime $q\nmid Np$, fix any isomorphism $i_q:B_q\simeq \M_2(\Q_q)$ with $i_q(\mathcal O_K\otimes_\Z\Z_q)\subset\M_2(\Z_q)$.

For each $m\geq 0$, let $R_m\subset B$ be the standard Eichler order of level $N^+p^m$ with respect to
our chosen $\{i_q:B_q\simeq{\rm M}_2(\Q_q)\}_{q\nmid N^-}$, and let $U_m\subset\widehat{R}_m^\times$ be the compact open subgroup
defined by
\[
U_m:=\left\{(x_q)_q\in\widehat{R}_m^\times\;\;\vert\;\;i_p(x_p)\equiv\mat 1*0*\pmod{p^m}\right\}.
\]
Consider the double coset spaces
\[
\widetilde X_m(K)=B^\times\big\backslash\bigl(\Hom_\Q(K,B)\times\widehat{B}^\times\bigr)\big/U_m,
\]
where $b\in B^\times$ act on left on the class of a pair $(\Psi,g)\in\Hom_\Q(K,B)\times\widehat B^\times$ by
\[
b\cdot[(\Psi,g)]=[(bgb^{-1},bg)],
\]
and $U_m$ acts on $\widehat{B}^\times$ by right multiplication. As explained in \cite[\S{2.1}]{LV-MM},
$\widetilde{X}_m(K)$ is naturally identified with the set $K$-rational points of certain curves of
genus zero defined over $\Q$. If $\sigma\in{\rm Gal}(K^{\rm ab}/K)$ and $P\in\widetilde{X}_m(K)$
is the class of a pair $(\Psi,g)$, then we set
\[
P^\sigma:=[(\Psi,g\widehat{\Psi}(a))],
\]
where $a\in K^\times\backslash\widehat{K}^\times$ is such that ${\rm rec}_K(a)=\sigma$ under the Artin
reciprocity map. This is extended to an action of $G_K:={\rm Gal}(\overline{\Q}/K)$
by letting $\sigma\in G_K$ act as $\sigma\vert_{K^{\rm ab}}$.

\subsection{Compatible systems of Heegner Points}
\label{subsec:construct}

Let $\cO_K$ be the ring of integers of $K$, and for each integer $c\geq 1$ prime to $N$,
let $\cO_c:=\Z+c\cO_K$ be the order of $K$ of conductor $c$.

\begin{definition}
We say that $\widetilde{P}=[(\Psi,g)]\in\widetilde{X}_m(K)$ is
a \emph{Heegner point of conductor $c$} if
\[
\Psi(\cO_c)=\Psi(K)\cap(B\cap g\widehat{R}_m g^{-1})
\]
and
\[
\Psi_p((\cO_c\otimes\Z_p)^\times\cap(1+p^m\cO_K\otimes\Z_p)^\times)
=\Psi_p((\cO_c\otimes\Z_p)^\times)\cap g_pU_{m,p}g_p^{-1},
\]
where $\Psi_p$ is the $p$-component of the ad\`elization of $\Psi$, and
$U_{m,p}$ is the $p$-component of $U_m$.
\end{definition}

In other words, $\widetilde{P}=[(\Psi,g)]\in\widetilde{X}_m(K)$ is a Heegner point of
conductor $c$ if $\Psi:K\hookrightarrow B$ is an \emph{optimal embedding} of $\cO_c$
into the Eichler order $B\cap g\widehat{R}_m g^{-1}$ (of level $N^+p^m$) and $\Psi_p$ takes
the elements of $(\cO_c\otimes\Z_p)^\times$ congruent to $1$
modulo $p^m\cO_K\otimes\Z_p$ optimally into $g_pU_{m,p}g_p^{-1}$.

The following result is fundamental for the construction of big Heegner points.

\begin{theorem}\label{thm:systemHP}
There exists a system of Heegner points $\widetilde{P}_{p^{n},m}\in\widetilde{X}_m(K)$ of conductor $p^{n+m}$,
for all $n\geq 0$, such that the following hold.
\begin{enumerate}
\item{} $\widetilde{P}_{p^n,m}\in H^0(L_{p^n,m},\widetilde{X}_m(K))$, where $L_{p^n,m}:=H_{p^{n+m}}(\boldsymbol{\mu}_{p^m})$.
\item{} For all $\sigma\in{\rm Gal}(L_{p^n,m}/H_{p^{n+m}})$,
\[
\widetilde{P}_{p^n,m}^\sigma=\langle\vartheta(\sigma)\rangle\cdot\widetilde{P}_{p^n,m},
\]
where $\vartheta:{\rm Gal}(L_{p^n,m}/H_{p^{n+m}})\rightarrow\Z_p^\times/\{\pm{1}\}$ is such that
$\vartheta^2=\varepsilon_{\rm cyc}$.
\item{} If $m>1$, then
\[
\sum_{\sigma\in{\rm Gal}(L_{p^n,m}/L_{p^{n-1},m})}
\widetilde{\alpha}_m(\widetilde{P}_{p^{n},m}^{{\sigma}})
=U_p\cdot\widetilde{P}_{p^{n},m-1},
\]
where $\widetilde{\alpha}_m:\widetilde{X}_m\rightarrow\widetilde{X}_{m-1}$ is the map
induced by the inclusion $U_m\subset U_{m-1}$.
\item{} If $n>0$, then
\[
\sum_{\sigma\in{\rm Gal}(L_{p^n,m}/L_{p^{n-1},m})}\widetilde{P}_{p^{n},m}^{{\sigma}}
=U_p\cdot\widetilde{P}_{p^{n-1},m}.
\]
\end{enumerate}
\end{theorem}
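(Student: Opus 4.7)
The plan is to construct the points $\widetilde{P}_{p^n,m}$ as explicit double coset representatives of the form $[(\Psi,g_{n,m})]$ and then verify all four properties by local computations at $p$ combined with Shimura's reciprocity law. First, I would fix an optimal embedding $\Psi\colon K\hookrightarrow B$ of $\cO_K$ defined by requiring that $\Psi(\boldsymbol{\theta})$ agrees at every finite place with the matrix prescribed by the local identifications $i_q$ of Section~\ref{subsec:Sh}; the congruence conditions imposed on $\beta$ guarantee that $\Psi(\cO_K)\subset R_0$ optimally at all primes. Next, I would take $g_{n,m}\in\widehat B^\times$ with trivial components away from $p$ and $g_{n,m,p}=i_p^{-1}\smallmat{p^{n+m}}{0}{0}{1}$ at $p$ (possibly adjusted on the right by a unit of $U_{m,p}$ so that the residual $p$-level structure matches the one cut out by $\Psi_p$).

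Second, I would verify the Heegner condition of conductor $p^{n+m}$ place by place. Away from $p$ this reduces to the optimality of $\Psi$ for the Eichler order of level $N^+$, which holds by construction. At $p$ the two displayed identities become short matrix computations using the explicit shape of $g_{n,m,p}$ and $i_p(\boldsymbol{\theta})$, showing that conjugation by $g_{n,m,p}$ intersects $\Psi_p(K_p)$ in exactly $\Psi_p(\cO_{p^{n+m}}\otimes\Z_p)$, and that the unit piece of this intersection meets $g_{n,m,p}U_{m,p}g_{n,m,p}^{-1}$ precisely along the mod-$p^m$ congruence subgroup.

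Third, properties~(1) and~(2) follow from Shimura's reciprocity law for the zero-dimensional Shimura set $\widetilde{X}_m(K)$: the action of $\sigma\in G_K$ is right multiplication of $g_{n,m}$ by $\widehat\Psi(a)$ for any $a\in\widehat K^\times$ with $\mathrm{rec}_K(a)=\sigma$. The stabilizer of the class $[(\Psi,g_{n,m})]$ in $\widehat K^\times/K^\times$ is the product of $\widehat\cO_{p^{n+m}}^\times$ away from $p$ with a subgroup at $p$ which, by the Heegner condition verified in the previous step, coincides modulo $\pm 1$ with $1+p^m\cO_K\otimes\Z_p$. Class field theory identifies the corresponding quotient with $\mathrm{Gal}(K^{\mathrm{ab}}/L_{p^n,m})$, yielding~(1), while the $\pm 1$ ambiguity at $p$ produces, via the local cyclotomic Artin map, a character $\vartheta$ on $\mathrm{Gal}(L_{p^n,m}/H_{p^{n+m}})$ with $\vartheta^2=\varepsilon_{\mathrm{cyc}}$, yielding~(2).

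Finally, for the trace identities~(3) and~(4), I would write the Galois trace over $L_{p^n,m}/L_{p^{n-1},m}$ as a sum over coset representatives of $(\cO_{p^{n-1+m}}\otimes\Z_p)^\times$ inside $(\cO_{p^{n+m}}\otimes\Z_p)^\times$, push this sum forward through $\widehat\Psi$, and match the resulting cosets with the standard Hecke coset decomposition for $U_p$ acting at level $U_m$ (for~(4)) or $U_{m-1}$ (for~(3)). The main obstacle will be~(3), where one must simultaneously lower the level from $U_m$ to $U_{m-1}$ via $\widetilde\alpha_m$ and track how the extra congruence slot at $p$ distributes across the $U_p$ cosets; the hard part is checking that the spurious contributions cancel, so that the sum lands exactly on $U_p\cdot\widetilde{P}_{p^n,m-1}$ rather than on some twisted variant.
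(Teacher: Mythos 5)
Your overall strategy (pick an explicit pair $(\Psi,g_{n,m})$, check the Heegner conditions locally, deduce (1)--(2) from Shimura reciprocity, and derive (3)--(4) from Hecke coset decompositions) is the standard template and is indeed the spirit of the paper's argument, which also builds the points as explicit double-coset representatives $\widetilde P_{p^n,m}=[(\imath_K,\varsigma^{(n+m)})]$ and then cites \cite[Props.~3.2--3, \S{4.4}, Props.~4.7--4.8]{LV-MM} for properties (1)--(4). However, there is a concrete gap in your setup at the primes dividing $N^+$.

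You propose to take $g_{n,m}$ with \emph{trivial} components away from $p$ and claim that ``the congruence conditions imposed on $\beta$ guarantee that $\Psi(\cO_K)\subset R_0$ optimally at all primes.'' This is false. The conditions on $\beta$ control $i_q(j)$, not $i_q(\boldsymbol\theta)$, and the paper's local identifications force $i_q(\boldsymbol\theta)$ to be the companion matrix $\smallmat{\mathrm{Tr}(\boldsymbol\theta)}{-\mathrm{Nm}(\boldsymbol\theta)}{1}{0}$ at every $q\mid pN^+$. For $q\mid N^+$ the standard Eichler order $R_{0,q}$ has positive level at $q$, hence is contained in the matrices that are upper-triangular mod $q$; but the companion matrix has $(2,1)$-entry equal to $1\notin q\Z_q$, so $i_q(\boldsymbol\theta)\notin R_{0,q}$ and your $\Psi$ is \emph{not} optimal there when $g_q=1$. (Adjusting $g$ on the right by a unit of $U_{m,p}$, as you allow, only changes the $p$-component and cannot repair this.) This is precisely why the paper's $\varsigma^{(s)}$ has the nontrivial components $\varsigma_q=\delta_K^{-1}\smallmat{\boldsymbol\theta}{\overline{\boldsymbol\theta}}{1}{1}$ at $\mathfrak q\mid\mathfrak N^+$, which conjugate the companion matrix into the required upper-triangular shape. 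Without those factors, the element you write down simply is not a Heegner point of conductor $p^{n+m}$.

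Two further points worth flagging. First, your choice $g_{n,m,p}=i_p^{-1}\smallmat{p^{n+m}}{0}{0}{1}$ does satisfy the two local Heegner conditions at $p$ (this is a short computation using that $\mathrm{Nm}(\boldsymbol\theta)\in\Z_p^\times$ and $n+m\geq m$), so that part of your plan is sound; but the paper deliberately uses $\varsigma_p^{(s)}=\smallmat{\boldsymbol\theta}{-1}{1}{0}\smallmat{p^s}{0}{0}{1}$ so that the resulting points coincide \emph{on the nose} with the test vectors in \cite[\S{2.2}]{ChHs1}, which is essential for the later comparison with the Chida--Hsieh theta elements; a merely isomorphic system of Heegner points would not serve the purpose of Theorem~\ref{thm:systemHP} in this paper. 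Second, your treatment of (3) and (4) stops at describing the difficulty (``checking that the spurious contributions cancel''); the paper resolves these by reducing to the local $U_p$-coset computation already carried out in \cite[Props.~4.7--4.8]{LV-MM}, so that step needs either that citation or the explicit matrix calculation you have only sketched.
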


\begin{proof}
A construction of a system of Heegner points with the claimed properties
is obtained in \cite[\S{4.2}]{LV-MM}, but this construction is ill-suited for the
purposes of this paper, 
since the global elements $\gamma^{(c,m)}$, $f^{(c,m)}$ appearing in [\emph{loc.cit.}, Cor.~4.5]
are not quite explicit. For this reason, we give instead the following construction
following the specific choices made in \cite[\S{2.2}]{ChHs1}).

Fix a decomposition $N^+\cO_K=\mathfrak{N}^+\overline{\mathfrak{N}^+}$, and define, for each prime $q\neq p$,
\begin{itemize}
\item{} $\varsigma_q=1$, if $q\nmid N^+$,
\item{} $\varsigma_q=\delta_K^{-1}\begin{pmatrix}\boldsymbol{\theta} & \overline{\boldsymbol{\theta}} \\ 1 & 1 \end{pmatrix}
\in{\rm GL}_2(K_{\mathfrak{q}})={\rm GL}_2(\Q_q)$, if
$q=\mathfrak{q}\overline{\mathfrak{q}}$ splits with $\mathfrak{q}\vert\mathfrak{N}^+$,
\end{itemize}
and for each $n\geq 0$,
\begin{itemize}
\item{} $\varsigma_p^{(s)}=\begin{pmatrix}\boldsymbol{\theta}&-1\\1&0\end{pmatrix}\begin{pmatrix}p^s&0\\0&1\end{pmatrix}
\in{\rm GL}_2(K_{\mathfrak{p}})={\rm GL}_2(\Q_p)$,
if $p=\mathfrak{p}\overline{\mathfrak{p}}$ splits,
\item{}
$\varsigma_p^{(s)}=\begin{pmatrix}0&1\\-1&0\end{pmatrix}\begin{pmatrix}p^s&0\\0&1\end{pmatrix}$, if $p$ is inert.
\end{itemize}

Set $\varsigma^{(s)}:=\varsigma_p^{(s)}\prod_{q\neq p}\varsigma_q\in\widehat{B}^\times$, and let
$\imath_K:K\hookrightarrow B$ be the inclusion. For all $n\geq 0$, it is easy to see that
the point
\[
\widetilde{P}_{p^n,m}:=[(\imath_K,\varsigma^{(n+m)})]
\]
is a Heegner point of conductor $p^{n+m}$ on $\widetilde{X}_m(K)$.
The proof of (1) then follows from \cite[Props.~3.2-3]{LV-MM}, and (2) from the discussion in
\cite[\S{4.4}]{LV-MM}. Finally, comparing the above $\varsigma_p^{(s)}$
with the local choices at $p$ in \cite[\S{4.1}]{LV-MM}, properties (3) and (4)
follow as in [\emph{loc.cit.}, Prop.~4.7] and [\emph{loc.cit.}, Prop.~4.8], respectively.
\end{proof}


\subsection{Hida's big Hecke algebras} \label{subsec:hida}

In order to define a ``big'' object assembling the compatible systems of Heegner points
introduced in $\S\ref{subsec:construct}$, we need to recall some basic facts about Hida
theory for ${\rm GL}_2$ and its inner forms. We refer the reader to \cite[\S\S{5-6}]{LV-MM}
(and the references therein) for a more detailed treatment of these topics than what follows.

As in the Introduction, let $f=\sum_{n=1}^\infty a_n(f)q^n\in S_{k_0}(\Gamma_0(Np))$ be an ordinary
$p$-stabilized newform (in the sense of \cite[Def.~2.5]{GS}) of weight $k_0\geq 2$ and trivial nebentypus,
defined over a finite extension $L/\Q_p$. In particular, $a_p(f)\in\cO_L^\times$, and
$f$ is either a newform of level $Np$, or arises from a newform of level $N$.
Let $\rho_f:G_\Q:={\rm Gal}(\overline{\Q}/\Q)\rightarrow{\rm GL}_2(L)$ be the Galois representation
associated with $f$. Since $f$ is ordinary at $p$, the restriction of $\rho_f$ to a decomposition group
$D_p\subset G_\Q$ is upper triangular.

\begin{assumption}
The residual representation $\bar{\rho}_f$ is absolutely irreducible, and \emph{$p$-distinguished}, i.e.,
writing $\bar{\rho}_{f}\vert_{D_p}\sim\left(\begin{smallmatrix}\bar{\varepsilon}&*\\0&\bar{\delta}\end{smallmatrix}\right)$,
we have $\bar{\varepsilon}\neq\bar{\delta}$.
\end{assumption}

For each $m\geq 0$, set $\Gamma_{0,1}(N,p^m):=\Gamma_0(N)\cap\Gamma_1(p^m)$, and denote
by $\mathfrak{h}_{m}$ the $\cO_L$-algebra generated by the
Hecke operators $T_\ell$ for $\ell\nmid Np$, the operators $U_\ell$ for $\ell\vert Np$,
and the diamond operators $\langle a\rangle$  for $a\in(\Z/p^m\Z)^\times$,
acting on $S_{2}(\Gamma_{0,1}(N,p^m),\overline{\Q}_p)$.
Let $e^{\rm ord}:=\lim_{n\to\infty}U_p^{n!}$ be Hida's ordinary projector, and define
\[
\mathfrak{h}_{m}^{\rm ord}:=e^{\rm ord}\mathfrak{h}_{m},\quad\quad
\mathfrak{h}_{}^{\rm ord}:=\varprojlim_m\mathfrak{h}^{\rm ord}_{m},
\]
where the limit is over the projections induced by the natural restriction maps.
Similarly, let $\mathbb{T}_{m}$ be the quotient of $\mathfrak{h}_{m}$
acting faithfully on the subspace of $S_{2}(\Gamma_{0,1}(N,p^m),\overline{\Q}_p)$
consisting of forms that are new at the primes dividing $N^-$, and set
\[
\mathbb{T}_{m}^{\rm ord}:=e^{\rm ord}\mathfrak{h}_{m},\quad\quad
\mathbb{T}_{}^{\rm ord}:=\varprojlim_m\mathbb{T}^{\rm ord}_{m}.
\]

Let $\Lambda:=\cO_L\pwseries{\Gamma}$, where $\Gamma=1+p\Z_p$.
These Hecke algebras are equipped with natural $\cO_L\pwseries{\Z_p^\times}$-algebra structures via the diamond operators,
and by a well-known result of Hida, $\mathfrak{h}^{\rm ord}$ is finite and flat over $\Lambda$.

The eigenform $f$ defines an $\cO_L$-algebra homomorphism $\lambda_f:\mathfrak{h}^{\rm ord}\rightarrow\cO_L$
factoring through the canonical projection $\mathfrak{h}^{\rm ord}\rightarrow\mathbb{T}^{\rm ord}$,
and we let $\mathfrak{h}_{\mathfrak{m}}^{\rm ord}$ (resp. $\mathbb{T}_{\mathfrak{n}}^{\rm ord}$)
be the localization of $\mathfrak{h}^{\rm ord}$ (resp. $\mathbb{T}^{\rm ord}$)
at ${\rm ker}(\bar{\lambda}_f)$. Moreover, there are unique
minimal primes $\mathfrak{a}\subset\mathfrak{h}_{\mathfrak{m}}^{\rm ord}$
(resp. $\mathfrak{b}\subset\mathbb{T}_{\mathfrak{n}}^{\rm ord}$), such that $\lambda_f$
factor through the integral domain
\[
\mathbb{I}:=\mathfrak{h}_{\mathfrak{m}}^{\rm ord}/\mathfrak{a}\cong\mathbb{T}_{\mathfrak{n}}^{\rm ord}/\mathfrak{b},
\]
where the isomorphism is induced by $\mathfrak{h}^{\rm ord}\rightarrow\mathbb{T}^{\rm ord}$.

\begin{definition}
A continuous $\cO_L$-algebra homomorphism $\k:\I\rightarrow\overline{\Q}_p$
is called an \emph{arithmetic prime} if the composition
\[
\Gamma\longrightarrow\Lambda^\times\longrightarrow\I^\times\xrightarrow{\;\k\;}\overline{\Q}_p^\times
\]
is given by $\gamma\mapsto\psi(\gamma)\gamma^{k-2}$,
for some integer $k\geq 2$ and some finite order character $\psi:\Gamma\rightarrow\overline{\Q}_p^\times$.
We then say that $\k$ has \emph{weight} $k$, \emph{character} $\psi$, and \emph{wild level} $p^m$,
where $m>0$ is such that ${\rm ker}(\psi)=1+p^m\Z_p$.
\end{definition}

Denote by $\mathcal{X}_{\rm arith}(\I)$ the set of arithmetic primes of $\I$,
and for each $\k\in\mathcal{X}_{\rm arith}(\I)$, let $F_\k$ be the residue field of $\p_\k:={\rm ker}(\k)\subset\I$,
which is a finite extension of $\Q_p$ with valuation ring $\cO_\k$.

For each $n\geq 1$, let $\mathbf{a}_n\in\I$ be the image of $T_n\in\mathfrak{h}^{\rm ord}$ under the natural
projection $\mathfrak{h}^{\rm ord}\rightarrow\I$, and form the $q$-expansion
$\F=\sum_{n=1}^\infty\mathbf{a}_nq^n\in\I\pwseries{q}$. By \cite[Thm.~1.2]{hida86b}, if $\k\in\mathcal{X}_{\rm arith}(\I)$
is an arithmetic prime of weight $k\geq 2$, character $\psi$, and wild level $p^m$, then
\[
\mathbf{f}_\k=\sum_{n=1}^\infty\k(\mathbf{a}_n)q^n\in F_\k[[q]]
\]
is (the $q$-expansion of) an ordinary $p$-stabilized newform $\F_\k\in S_k(\Gamma_0(Np^{m}),\omega^{k_0-k}\psi)$,
where $\omega:(\Z/p\Z)^\times\rightarrow\Z_p^\times$ is the Teichm\"uller character.

Following \cite[Def.~2.1.3]{howard-invmath}, factor the $p$-adic cyclotomic character as
\[
\varepsilon_{\rm cyc}=\varepsilon_{\rm tame}\cdot\varepsilon_{\rm wild}:G_\Q\longrightarrow \Z_p^\times\simeq\boldsymbol{\mu}_{p-1}\times\Gamma,
\]
and define the \emph{critical character} $\Theta:G_\Q\rightarrow\mathbb{I}^\times$ by
\begin{equation}\label{def:crit}
\Theta(\sigma)=\varepsilon_{\rm tame}^{\frac{k_0-2}{2}}(\sigma)\cdot[\varepsilon^{1/2}_{\rm wild}(\sigma)],\nonumber
\end{equation}
where $\varepsilon_{\rm tame}^{\frac{k_0-2}{2}}:G_\Q\rightarrow\boldsymbol{\mu}_{p-1}$ is any fixed choice
of square-root of $\varepsilon_{\rm tame}^{k_0-2}$ (see \cite[Rem.~2.1.4]{howard-invmath}),
$\varepsilon_{\rm wild}^{1/2}:G_\Q\rightarrow\Gamma$ is
the unique square-root of $\varepsilon_{\rm wild}$ taking values in $\Gamma$,
and $[\cdot]:\Gamma\rightarrow\Lambda^\times\rightarrow\I^\times$ is the map given by
the inclusion as group-like elements.

Define the character $\theta:\Z_p^\times\rightarrow\I^\times$ by the relation
\[
\Theta=\theta\circ\varepsilon_{\rm cyc},
\]
and for each $\k\in\mathcal{X}_{\rm arith}(\I)$, let $\theta_\k:\Z_p^\times\rightarrow\overline{\Q}_p^\times$
be the composition of $\theta$ with $\k$. If $\k$ has weight $k\geq 2$ and character $\psi$,
one easily checks that
\begin{equation}\label{eq:theta}
\theta_\k^2(z)=z^{k-2}\omega^{k_0-k}\psi(z)
\end{equation}
for all $z\in\Z_p^\times$. 

\subsection{Big Heegner points in the definite setting}\label{subsec:bigHP}

Let $D_m$ be the submodule of ${\rm Div}(\widetilde{X}_m)$ supported on points in
$\widetilde{X}_m(K)$, and set
\[
D_m^{\rm ord}:=e^{\rm ord}(D_m\otimes_{\Z}\mathcal{O}_L).
\]
Let $\I^\dagger$ be the free $\I$-module of rank one
equipped with the Galois action via $\Theta^{-1}$, 
and define
\[
\mathbb{D}_m:=D_m^{\rm ord}\otimes_{\mathbb{T}^{\rm ord}}\I,
\quad\quad
\mathbb{D}_m^\dagger:=\mathbb{D}_m\otimes_{\I}\I^\dagger.
\]
Let $\widetilde{P}_{p^{n},m}\in\widetilde{X}_{m}(K)$
be the system of Heegner points introduced in $\S\ref{subsec:construct}$, and
denote by $\mathbb{P}_{p^{n},m}$ the image of
$e^{\rm ord}\widetilde{P}_{p^{n},m}$ in $\mathbb{D}_m$.
By Theorem~\ref{thm:systemHP}(2) we then have
\begin{equation}\label{eq:Gal-action}
\mathbb{P}_{p^{n},m}^\sigma=\Theta(\sigma)\cdot\mathbb{P}_{p^{n},m}
\end{equation}
for all $\sigma\in{\rm Gal}(L_{p^n,m}/H_{p^{n+m}})$ (see \cite[\S{7.1}]{LV-MM}),
and hence $\mathbb{P}_{p^{n},m}$ defines an element
\begin{equation}\label{eq:n,m}
\mathbb{P}_{p^n,m}\otimes\zeta_m\in H^0(H_{p^{n+m}},\mathbb{D}_m^\dagger).
\end{equation}
Moreover, by Theorem~\ref{thm:systemHP}(3) the classes
\[
\mathcal{P}_{p^n,m}:={\rm Cor}_{H_{p^{n+m}}/H_{p^n}}(\mathbb{P}_{p^n,m}\otimes\zeta_m)\in H^0(H_{p^n},\mathbb{D}_m^\dagger)
\]
satisfy $\alpha_{m,*}(\mathcal{P}_{p^n,m})=U_p\cdot\mathcal{P}_{p^n,m-1}$ for all $m>1$.

\begin{definition}\label{def:bigHP}
The \emph{big Heegner point of conductor $p^n$} is the element
\[
\mathcal{P}_{p^n}:=\varprojlim_m U_p^{-m}\cdot\mathcal{P}_{p^n,m}
\in H^0(H_{p^n},\mathbb{D}^\dagger),
\]
where $\mathbb{D}^\dagger:=\varprojlim_m\mathbb{D}_m^\dagger$.
\end{definition}

\subsection{Big theta elements}\label{subsec:bigtheta}

Let ${\rm Pic}(\widetilde{X}_m)$ 
be the Picard group of $\widetilde{X}_m$, and set
\[
J_m^{\rm ord}:=e^{\rm ord}({\rm Pic}(\widetilde{X}_m)\otimes_{\Z}\mathcal{O}_L),
\quad\quad\mathbb{J}_m:=J_m^{\rm ord}\otimes_{\mathbb{T}^{\rm ord}}\I,
\quad\quad\mathbb{J}_m^\dagger:=\mathbb{J}_m\otimes_{\I}\I^\dagger.
\]
The projections ${\rm Div}(\widetilde{X}_m)\rightarrow{\rm Pic}(\widetilde{X}_m)$ induce a map
$\mathbb{D}:=\varprojlim_m\mathbb{D}_m\rightarrow\varprojlim_m\mathbb{J}_m=:\mathbb{J}$.

\begin{assumption}\label{mult}
${\rm dim}_{k_\I}(\mathbb{J}/\mathfrak{m}_\I\mathbb{J})=1$.
\end{assumption}

Here, $\mathfrak{m}_\I$ is the maximal ideal of $\I$, and $k_\I:=\I/\mathfrak{m}_\I$ is its residue field.
By \cite[Prop.~9.3]{LV-MM}, Assumption~\ref{mult} implies that the module $\mathbb{J}$ is free of
rank one over $\I$; this assumption will be in force throughout the rest of this paper.

Let $\Gamma_\infty=\varprojlim_n{\rm Gal}(K_n/K)$ be the Galois group
of the anticyclotomic $\Z_p$-extension $K_\infty/K$. For each $n\geq 0$, set
\[
\mathcal{Q}_n:={\rm Cor}_{H_{p^{n+1}}/K_n}(\mathcal{P}_{p^{n+1}})\in H^0(K_n,\mathbb{D}^\dagger).
\]
Abbreviate $\Gamma_n:=\Gamma_\infty^{p^n}={\rm Gal}(K_n/K)$.

\begin{definition}\label{def:bigtheta}
Fix an isomorphism $\eta:\mathbb{J}\rightarrow\I$. The \emph{$n$-th big theta element}
attached to $\mathbf{f}$ is the element $\Theta_{n}^{\rm alg}(\mathbf{f})\in\I[\Gamma_n]$ given by
\[
\Theta_{n}^{\rm alg}(\mathbf{f}):=\mathbf{a}_{p}^{-n}\cdot\sum_{\sigma\in\Gamma_n}\eta_{K_n}(\mathcal{Q}_{n}^\sigma)\otimes\sigma,
\]
where $\eta_{K_n}$ is the composite map
$H^0(K_n,\mathbb{D}^\dagger)\rightarrow\mathbb{D}\rightarrow\mathbb{J}\xrightarrow{\eta}\I$.
\end{definition}

\begin{remark}\label{rem:eta}
Plainly, two different choices of $\eta$ in Definition~\ref{def:bigtheta}
give rise to elements $\Theta_{n}^{\rm alg}(\mathbf{f})$ which differ by a unit in $\I\subset\I[\Gamma_n]$.
Following \cite[\S\S{9.2-3}]{LV-MM}, this  dependence on $\eta$ will not be
reflected in the notation, but note that for the proof of our main result (Theorem~\ref{thm:higher} below),
a certain ``normalized'' choice of $\eta$ will be made.
\end{remark}

Using Theorem~\ref{thm:systemHP}(3),
one easily checks that the elements $\Theta_n^{\rm alg}(\mathbf{f})$ are compatible
under the natural maps $\I[\Gamma_m]\rightarrow\I[\Gamma_n]$ for all $m\geq n$, thus defining an element
\begin{equation}\label{def:biginfty}
\Theta_{\infty}^{\rm alg}(\mathbf{f}):=\varprojlim_n\Theta^{\rm alg}_{n}(\mathbf{f})
\end{equation}
in the completed group ring $\I\pwseries{\Gamma_\infty}:=\varprojlim_n\I[\Gamma_n]$.

\begin{definition}\label{def:k-theta}
The \emph{algebraic two-variable $p$-adic $L$-function} attached $\mathbf{f}$ and $K$ is
the element
\[
L_p^{\rm alg}(\mathbf{f}/K):=\Theta_{\infty}^{\rm alg}(\mathbf{f})\cdot\Theta^{\rm alg}_{\infty}(\mathbf{f})^*
\in\I\pwseries{\Gamma_\infty},
\]
where $x\mapsto x^*$ is the involution on $\I\pwseries{\Gamma_\infty}$ given by
$\gamma\mapsto\gamma^{-1}$ on group-like elements.
\end{definition}

\section{Special values of $L$-series}\label{sec:L-values}

\subsection{Modular forms on definite quaternion algebras}

Let $B/\Q$ be a definite quaternion algebra as in $\S\ref{subsec:Sh}$. In particular,
we have a $\Q_p$-algebra isomorphism $i_p:B_p\simeq\GL_2(\Q_p)$.

\begin{definition}
Let $M$ be a $\Z_p$-module together with a right linear action of the semigroup
${\rm M}_2(\Z_p)\cap{\rm GL}_2(\Q_p)$, and let $U\subset\widehat{B}^\times$ be a
compact open subgroup. An \emph{$M$-valued automorphic form on $B$ of level $U$} is a function
\[
\phi:\widehat B^\times\longrightarrow M
\]
such that
\[
\phi(bgu)=\phi(g)\vert i_p(u_p),
\]
for all $b\in B^\times$, $g\in\widehat B^\times$ and $u\in U$.
Denote by $S(U,M)$ the space of such functions.
\end{definition}

For any $\Z_p$-algebra $R$, let
\[
\mathscr{P}_k(R)={\rm Sym}^{k-2}(R^2)
\]
be the module of homogeneous polynomials $P(X,Y)$ of degree $k-2$ with coefficients in $R$,
equipped with the right linear action of ${\rm M}_2(\Z_p)\cap{\rm GL}_2(\Q_p)$
given by
\[
(P\vert\gamma)(X,Y):=
P(dX-cY,-bX+aY)
\]
for all $\gamma=\smallmat abcd$. Set $S_k(U;R):=S(U,\mathscr{P}_k(R))$, and $S_k(U):=S_k(U;\C_p)$.


\subsection{The Jacquet--Langlands correspondence}\label{subsec:JL}

The spaces $S(U,M)$ are equipped with an action of Hecke operators
$T_\ell$ for $\ell\nmid N^-$ (denoted $U_\ell$ for $\ell\vert pN^+$).  

Recall that $\Gamma_{0,1}(N,p^m):=\Gamma_0(N)\cap\Gamma_1(p^m)$,
and denote by $S_k^{{\text{\rm new-}}N^-}(\Gamma_{0,1}(N,p^m))$ the subspace of $S_k(\Gamma_{0,1}(N,p^m);\C_p)$
consisting of cusp forms which are \emph{new} at the primes dividing $N^-$. Define the subspace
$S_k^{{\text{\rm new-}}N^-}(\Gamma_0(Np^m))$ of $S_k(\Gamma_0(Np^m);\C_p)$ is the same manner.

\begin{theorem}\label{thm:JL}
For each $k\geq 2$ and $m\geq 0$, there exist Hecke-equivariant isomorphisms
\begin{align*}
S_k(U_m)&\longrightarrow S_k^{{\text{\rm new-}}N^-}(\Gamma_{0,1}(N,p^m))\\
S_k(\widehat{R}_m^\times)&\longrightarrow S_k^{{\text{\rm new-}}N^-}(\Gamma_{0}(Np^m)).
\end{align*}
\end{theorem}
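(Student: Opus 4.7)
The plan is to reduce this theorem to the classical Jacquet--Langlands correspondence, with the bulk of the work consisting of a translation between the ``$p$-adic'' formulation of $\mathscr P_k$-valued forms given in the excerpt and the classical adelic notion of weight-$k$ automorphic forms on $B^\times$. As a first step, I would identify $S_k(U_m)$ (respectively $S_k(\widehat R_m^\times)$) with the space of classical cuspidal automorphic forms on $B^\times$ of weight $k$ and level $U_m$ (respectively $\widehat R_m^\times$). In the definition of $S_k(U)$ given here, the weight $k$ appears solely through the right action of $i_p(u_p)$ on $\mathscr P_k(\C_p)$; since $B$ is definite, $B_\infty^\times$ is compact modulo center, and the isomorphism $\mathscr P_k(\C)\cong\mathscr P_k(\C_p)$ induced by $\imath_\infty^{-1}\circ\imath_p$ transports the classical archimedean weight-$k$ action on $\mathscr P_k(\C)$ to an equivalent $p$-adic action on $\mathscr P_k(\C_p)$. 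This lets us view $\phi\in S_k(U)$ as a classical weight-$k$ form on $B^\times$, and the identification is Hecke-equivariant because the Hecke operators at $\ell\nmid N^-$ are defined by the same finite double cosets on both sides.

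With this identification in hand, I would invoke the classical Jacquet--Langlands theorem, which provides a Hecke-equivariant bijection between cuspidal automorphic representations $\pi^B$ of $B^\times$ and cuspidal automorphic representations $\pi$ of $\mathrm{GL}_{2/\Q}$ such that $\pi_q$ is discrete series at every finite place where $B$ ramifies, i.e., every prime dividing $N^-$. The local correspondence at such $q$ identifies $\pi_q^B$ with a twist of Steinberg, whose new vectors are one-dimensional and sit in the new-at-$q$ subspace of classical forms; this accounts for the ``new-at-$N^-$'' condition on the target. Translating level structures at the remaining finite primes via the local isomorphisms $i_q$, the Eichler order $R_m$ gives rise to $\Gamma_0(N^+p^m)$-invariants at $q\nmid N^-$, and the additional upper-triangular-mod-$p^m$ condition defining $U_m$ at $p$ yields a $\Gamma_1(p^m)$-type refinement. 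Consequently, $U_m$-invariants on the $B$-side correspond (via Jacquet--Langlands together with new-at-$N^-$) to $\Gamma_{0,1}(N,p^m)$-invariants on the $\mathrm{GL}_2$-side, and $\widehat R_m^\times$-invariants to $\Gamma_0(Np^m)$-invariants, establishing both isomorphisms.

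The main obstacle is not a deep new ingredient but the careful bookkeeping required to match the polynomial slash action $(P\mid\gamma)(X,Y)=P(dX-cY,-bX+aY)$ used in our definition with the usual weight-$k$ action of $\mathrm{GL}_2(\R)$ appearing in classical modular forms, and to verify, place by place, that the concrete local embeddings $i_q$ (together with the explicit formulas for $i_q(\boldsymbol\theta)$ and $i_q(j)$ at $q\mid pN^+$ fixed in Section~\ref{subsec:Sh}) do indeed take $R_{m,q}$ and $U_{m,q}$ to the expected $\Gamma_0$- and $\Gamma_1$-type subgroups of $\mathrm{GL}_2(\Q_q)$. Once these normalizations are pinned down, the theorem follows at once from the classical Jacquet--Langlands correspondence combined with Atkin--Lehner theory of newforms at the primes dividing $N^-$.
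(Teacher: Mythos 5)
The paper does not actually prove Theorem~\ref{thm:JL}: it is stated without a \verb|proof| environment and is treated as a standard reference result (the Jacquet--Langlands/Eichler--Shimizu correspondence for definite quaternion algebras, in the form that is by now classical in Hida theory). So there is no argument in the paper against which to compare yours in detail; the honest comparison is that the authors cite the result, whereas you sketch its proof.

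That said, your sketch is a correct outline of the standard argument. The translation you propose in the first paragraph is the right one: since $B_\infty$ is the Hamilton quaternions and the ``weight'' only enters through the action of $i_p(u_p)$ on $\mathscr P_k(\C_p)$, the identification $\mathscr P_k(\C)\cong\mathscr P_k(\C_p)$ via $\imath_\infty,\imath_p$ turns $S_k(U)$ into a space of classical weight-$k$ automorphic forms on $B^\times$, and Hecke operators at $\ell\nmid N^-$ are defined by the same double cosets on both sides. Invoking the adelic Jacquet--Langlands correspondence then gives a Hecke-equivariant bijection of representations, and the new-at-$N^-$ condition on the $\GL_2$ side is exactly the local Steinberg (twist) condition at the ramified primes. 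Matching $R_m$ and $U_m$ to $\Gamma_0(Np^m)$ and $\Gamma_{0,1}(N,p^m)$ through the chosen $i_q$ is precisely the bookkeeping you flag.

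Two small remarks you should be aware of if you were to write this out in full. First, the polynomial right-action $(P|\gamma)(X,Y)=P(dX-cY,-bX+aY)$ used here is the contragredient of the more common $P((X,Y)\gamma)$ action, so the intertwiner with the archimedean weight-$k$ model picks up a twist by the determinant and an Atkin--Lehner-type involution; this does not affect the existence of the isomorphism but does affect which Hecke normalization one lands on. Second, for $k=2$ the space $S_2(U_m)$ on a definite quaternion algebra contains functions factoring through the reduced norm, which do not correspond to cuspforms on $\GL_2$; these are killed by the new-at-$N^-$ condition (such characters are old at the ramified primes), but this is exactly the kind of point that needs to be said explicitly, and your sketch glosses over it. Neither issue is a genuine gap in the approach, only in the level of detail.
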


In the following, for each $\k\in\mathcal{X}_{\rm arith}(\I)$ of weight $k\geq 2$ and wild
level $p^m$, we will denote by $\phi_{\F_\k}\in S_{k}(U_{m})$ an automorphic form on $B$ with the same
system of Hecke-eigenvalues as $\F_\k$. By multiplicity one,
$\phi_{\F_\k}$ is determined up to a scalar in $F_\k^\times$, and we assume
$\phi_{\F_\k}$ is \emph{$p$-adically normalized}
in the sense of \cite[p.18]{ChHs1}, so that $\phi_{\F_\k}$ is defined over $\cO_\k$,
and $\phi_{\F_\k}\not\equiv 0\pmod{p}$.

\subsection{Higher weight theta elements}\label{subsec:ChHs}

We recall the construction by Chida--Hsieh \cite{ChHs1}
of certain higher weight analogues of the theta elements introduced by
Bertolini--Darmon \cite{BDmumford-tate} in the elliptic curve setting.


Let $f=\sum_{n=1}^\infty a_n(f)q^n\in S_k(\Gamma_0(Np))$ be an ordinary $p$-stabilized newform
of weight $k\geq 2$ and trivial nebentypus, defined over a finite extension $L$ of $\Q_p$ with
ring of integers $\cO_L$.

For any ring $A$, let $\mathscr{L}_k(A)$ be the module of homogeneous polynomials $P(X,Y)$
of degree $k-2$ with coefficients in $A$,
equipped with left action $\rho_k$ of $\mathrm{GL}_2(A)$ given by
\[
\rho_k(g)(P(X,Y)):=\det(g)^{-\frac{k-2}{2}}P((X,Y)g),
\]
for all $g\in\mathrm{GL}_2(A)$, and define the pairing $\langle,\rangle_k$ on $\mathscr{L}_k(A)$
by setting
\begin{equation}\label{eq:pairing}
\langle \sum_{i}a_i\mathbf{v}_i,\sum_{j}b_j\mathbf{v}_j\rangle_k
=\sum_{-\frac{k}{2}<m<\frac{k}{2}}a_mb_{-m}\cdot(-1)^{\frac{k-2}{2}+m}\frac{\Gamma(k/2+m)\Gamma(k/2-m)}{\Gamma(k-1)},\nonumber
\end{equation}
where $\mathbf{v}_m:=X^{\frac{k}{2}-1-m}Y^{\frac{k}{2}-1+m}$.

Let $\mathcal{G}_n:=K^\times\backslash\widehat{K}^\times/\widehat{\cO}_{p^n}^\times$
be the Picard group of $\cO_{p^n}$,
and denote by $[\cdot]_n$ the natural projection $\widehat{K}^\times\rightarrow\mathcal{G}_n$.
For the following definition, recall the scalars $\beta$ and $\delta_K$
introduced in $\S\ref{subsec:Sh}$, and the system of elements $\varsigma^{(n)}\in\widehat{B}^\times$
from Theorem~\ref{thm:systemHP}, and let $\phi_f\in S_k(\widehat{R}_1^\times)$ be a $p$-adic
Jacquet--Langlands lift of $f$ $p$-adically normalized as in $\S\ref{subsec:JL}$.

\begin{definition}\label{def:theta}
Let $-k/2<m<k/2$, and $n\geq 0$. The \emph{$n$-th theta element of weight $m$}
is the element $\vartheta_n^{[m]}(f)\in\frac{1}{(k-2)!}\cO_L[\mathcal{G}_n]$ given by
\[
\vartheta_n^{[m]}(f):=\alpha_p(f)^{-n}
\sum_{[a]_n\in\mathcal G_n}\langle\rho_k(Z_p^{(n)})\mathbf{v}_m^*,\phi_f(a\cdot\varsigma^{(n)})\rangle_k\cdot[a]_n
\]
where 
\begin{itemize}
\item{} $\alpha_p(f):=a_p(f)p^{-\frac{k-2}{2}}$, 
\item{} 
$Z_p^{(n)}=
\left\{
\begin{array}{ll}
\smallmat 1{\sqrt{\beta}}0{p^n\sqrt{\beta}\delta_K} & \textrm{if $p$ splits in $K$,}\\
\smallmat 1{\sqrt{\beta}}{-p^n\boldsymbol{\theta}}{-p^n\sqrt{\beta}\overline{\boldsymbol{\theta}}} 
& \textrm{if $p$ is inert in $K$,}
\end{array}
\right.$
\item{} $\mathbf{v}_m^*:=\sqrt{\beta}^{-m}D_K^{\frac{k-2}{2}}\cdot\mathbf{v}_m$. 
\end{itemize}
\end{definition}

Note that the denominator $(k-2)!$ arises from the definition of $\langle,\rangle_k$ (cf. Remark~\ref{rem:1-k/2}), 
and let $\theta_n(f)$ be the image of $\vartheta_{n+1}(f)$ under the projection $\frac{1}{(k-2)!}\cO_L[\mathcal{G}_{n+1}]\rightarrow\frac{1}{(k-2)!}\cO_L[\Gamma_n]$.

If $\chi:K^\times\backslash\mathbf{A}_K^\times\rightarrow\C^\times$ is an anticyclotomic
Hecke character of $K$ (so that $\chi\vert_{\mathbf{A}_{\Q}^\times}=\mathds{1}$), we say that
$K$ has \emph{infinity type} $(m,-m)$ if
\[
\chi(z_\infty)=(z_\infty/\overline{z}_\infty)^{m},
\]
for all $z_\infty\in(K\otimes_{\Q}\R)^\times$,
and define the \emph{$p$-adic avatar} $\widehat\chi:K^\times\backslash\widehat{K}^\times\rightarrow\C_p^\times$
of $\chi$ by setting
\[
\widehat\chi(a)=\imath_p\circ\imath_\infty^{-1}(\chi(a))(a_p/\overline{a}_p)^m,
\]
for all $a\in\widehat{K}^\times$, where $a_p\in(K\otimes_\Q\Q_p)^\times$ is the $p$-component of $a$.
If $\chi$ has conductor $p^n$, then $\widehat{\chi}$ factors through $\mathcal{G}_n$, which we shall
identify with the Galois group ${\rm Gal}(H_{p^n}/K)$ via the (geometrically normalized) Artin reciprocity map.

\begin{theorem}\label{thm:interp}
Let $\widehat{\chi}$ be the $p$-adic avatar of a Hecke
character $\chi$ of $K$ of infinity type $(m,-m)$ with $-k/2<m<k/2$ and conductor $p^s$.
Then for all $n\geq{\rm max}\;\{s,1\}$, we have
\begin{align*}
\widehat{\chi}(\theta_n^{[m]}(f)^2)
&=C_p(f,\chi)\cdot E_p(f,\chi)\cdot\frac{L_K(f,\chi,k/2)}{\Omega_{f,N^-}^{}},
\end{align*}
where
\begin{itemize}
\item{} $C_p(f,\chi)=(\vert\mathcal{O}_K^\times\vert/2)^2\cdot\Gamma(k/2+m)\Gamma(k/2-m)
\cdot(p/a_p(f)^2)^n\cdot(p^nD_K)^{k-2}\cdot\sqrt{D_K}$,
\item{}
$E_p(f,\chi)=\left\{
\begin{array}{ll}
1 &\textrm{if $n>0$,}\\
(1-\alpha_p^{-1}\chi(\mathfrak{p}))^2\cdot(1-\alpha_p^{-1}\chi(\bar{\mathfrak{p}}))^2
& \textrm{if $n=0$ and $p=\mathfrak{p}\bar{\mathfrak{p}}$ splits in $K$,}\\
(1-\alpha_p^{-2})^2
& \textrm{if $n=0$ and $p$ is inert in $K$,}
\end{array}
\right.$
\item{} $\Omega_{f,N^-}\in\C^\times$ is Gross's period.
\end{itemize}
\end{theorem}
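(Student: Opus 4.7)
The plan is to follow the strategy that underlies \cite{ChHs1}: reduce $\widehat{\chi}(\theta_n^{[m]}(f))$ to a toric period integral of $\phi_f$ against $\widehat\chi$, square it, and apply an explicit form of Waldspurger's formula to obtain the central $L$-value.

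First, I would unfold the definition. Writing $\widehat\chi$ for the $p$-adic avatar, the definition of $\vartheta_n^{[m]}(f)$ gives
\[
\widehat\chi(\theta_n^{[m]}(f))\;=\;\alpha_p(f)^{-n}\sum_{[a]_n\in\mathcal{G}_n}\bigl\langle\rho_k(Z_p^{(n)})\mathbf v_m^*,\phi_f(a\cdot\varsigma^{(n)})\bigr\rangle_k\cdot\widehat\chi(a).
\]
Because $\phi_f$ is $\widehat R_m^\times$-invariant, the sum is a finite Riemann sum approximating the global toric integral
\[
P(\phi_f,\widehat\chi)\;=\;\int_{K^\times\widehat{\Q}^\times\backslash\widehat K^\times}\bigl\langle\rho_k(Z_p^{(n)})\mathbf v_m^*,\phi_f(a\cdot\varsigma^{(n)})\bigr\rangle_k\,\widehat\chi(a)\,da,
\]
and the quantity $\widehat\chi(\theta_n^{[m]}(f))^2=\widehat\chi(\theta_n^{[m]}(f)^2)$ is (up to the explicit volume/normalization factor built into the definition of $\alpha_p(f)^{-n}$ and the level $\varsigma^{(n)}$) the squared toric period $|P(\phi_f,\widehat\chi)|^2$.

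Second, I would invoke the explicit Waldspurger formula in the form worked out in the definite quaternionic setting (which, combined with the Jacquet--Langlands correspondence of Theorem~\ref{thm:JL}, is available because every prime of $N^-$ is inert in $K$ and every prime of $pN^+$ is split). This expresses
\[
|P(\phi_f,\widehat\chi)|^2\;=\;\frac{L_K(f,\chi,k/2)}{\Omega_{f,N^-}}\cdot\prod_v \alpha_v(\phi_{f,v},\chi_v),
\]
where $\alpha_v$ are normalised local zeta integrals and the denominator $\Omega_{f,N^-}$ is precisely Gross's period, designed to absorb the global Petersson norm of the $p$-adically normalised test vector. Each local factor must then be computed with the specific test data $(\mathbf v_m^*,Z_p^{(n)},\varsigma_v)$:
\begin{itemize}
\item The archimedean integral against $\mathbf v_m=X^{k/2-1-m}Y^{k/2-1+m}$ yields $\Gamma(k/2+m)\Gamma(k/2-m)$ by a direct beta-integral computation, and the factors $\sqrt\beta^{-m}$, $D_K^{(k-2)/2}$ in $\mathbf v_m^*$ are responsible for the $D_K^{k-2}\sqrt{D_K}$ in $C_p(f,\chi)$.
\item At primes $\ell\nmid Np$ and at $\ell\mid N^-$, the Atkin--Lehner/new-vector theory of the chosen $\phi_f$ forces $\alpha_\ell=1$.
\item At $\ell\mid N^+$ the elements $\varsigma_\ell$ are arranged so that the local integral equals $1$, thanks to the Waldspurger new-line property of split-type test vectors.
\item At $p$ is where the factors $(p/a_p(f)^2)^n$ and $(p^n D_K)^{k-2}$, together with the multiplier $E_p(f,\chi)$, arise. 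This is the key computation.
\end{itemize}

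The main obstacle will be the local calculation at $p$: one must compute the local zeta integral of the ordinary $p$-stabilised vector $\phi_{f,p}$, translated by the matrix $\varsigma_p^{(n)}$ and paired with $\rho_k(Z_p^{(n)})\mathbf v_m^*$, against $\widehat\chi_p$. For $n\geq 1$ (conductor at least $p$), the standard matching between the $p$-stabilisation and the conductor of $\widehat\chi_p$ causes the local integral to equal a power of $\alpha_p(f)$ (absorbed into the normalising $\alpha_p(f)^{-n}$) times $1$, giving $E_p(f,\chi)=1$. For $n=0$, the nonoptimal choice of conductor forces an old-form shift, which after a short computation produces the Euler-type factor $(1-\alpha_p^{-1}\chi(\mathfrak p))^2(1-\alpha_p^{-1}\chi(\bar{\mathfrak p}))^2$ in the split case and $(1-\alpha_p^{-2})^2$ in the inert case. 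The inert case is the more delicate: one must use the explicit model of $i_p$ on $B_p$ and the matrix $\smallmat 01{-1}0\smallmat{p^s}001$ defining $\varsigma_p^{(s)}$, and handle the nonsplit torus integral via a residue identity for the local $L$-factor of $\pi_p\otimes\chi_p$. Once these local computations are assembled and combined with the definition of $\Omega_{f,N^-}$, the claimed formula follows.
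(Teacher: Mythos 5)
The paper's ``proof'' of this statement consists of a single line: \emph{``This is \cite[Prop.~4.3]{ChHs1}.''} The theorem is a verbatim recollection of a result already established by Chida and Hsieh, and the present paper does not reprove it. So there is nothing in the paper to compare your argument against directly; what you have written is instead a reconstruction (in outline) of the proof one finds in Chida--Hsieh.

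Taken on those terms, your sketch is broadly the right strategy: unfold the sum defining $\vartheta_n^{[m]}(f)$, identify it with a toric period of $\phi_f$ against $\widehat\chi$, and invoke an explicit Waldspurger-type formula in the definite quaternionic setting (with Gross's period appearing to normalize the test vector), reducing the whole computation to local zeta integrals. This is essentially what Chida--Hsieh do. Two remarks are worth making. First, the phrase ``finite Riemann sum approximating the global toric integral'' is not quite right: because $B$ is definite, the double-coset space $K^\times\backslash\widehat K^\times/\widehat{\cO}_{p^n}^\times$ is finite, and the sum over $\mathcal G_n$ \emph{is} the toric period up to an explicit volume constant, not an approximation to it. Second, your sketch openly defers the heart of the matter to the local zeta integral at $p$ (which produces $(p/a_p(f)^2)^n$, $(p^nD_K)^{k-2}$, and the multiplier $E_p(f,\chi)$, including the delicate $n=0$ and inert cases); that computation is precisely the content of Chida--Hsieh's Proposition 4.3, so as it stands your write-up reduces the theorem to the very reference the paper cites rather than proving it. For the purposes of this paper, which simply quotes the result, that is acceptable, but you should be clear that you are invoking, not reproving, the local calculation at $p$.
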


\begin{proof}
This is \cite[Prop.~4.3]{ChHs1}.
\end{proof}

\begin{remark}\label{rem:1-k/2}
For our later use, we record the following simplified expression for the
$n$-th theta element $\vartheta_n^{[m]}(f)$ for $m=-(k/2-1)$.
Define $\phi_f^{[j]}:\widehat{B}^\times\rightarrow\cO$ by the rule
\[
\phi_f(b)=\sum_{-k/2<j<k/2}\phi_f^{[j]}(b)\mathbf{v}_j;
\]
in particular, $\phi_f^{[k/2-1]}(b)$ is the coefficient of
$Y^{k-2}$ in $\phi_f(b)$. 
Using that ${\rm det}(Z_p^{(n)})=p^n\sqrt{\beta}\delta_K$,
and the relation
\[
\langle\mathbf{v}_{-j},\phi_f(b))\rangle_k
=(-1)^{\frac{k-2}{2}+j}\frac{\Gamma(k/2+j)\Gamma(k/2-j)}{\Gamma(k-1)}\cdot\phi_f^{[j]}(b),
\]
an immediate calculation reveals that\footnote{This is in fact an equality when $p$ splits in $K$, 
because of the simpler shape of $Z_p^{(n)}$ (see Definition~\ref{def:theta}) in this case.}
\begin{equation}\label{eq:1-k/2}
\vartheta_n^{[1-k/2]}(f)\equiv\delta_K^{k/2-1}\cdot a_p(f)^{-n}
\sum_{[a]_n\in\mathcal{G}_n}\phi_f^{[k/2-1]}(a\varsigma^{(n)})\cdot[a]_n\pmod{p^n}.
\end{equation}
Also, note that in this case $\vartheta_n^{[1-k/2]}(f)\in\cO_L[\mathcal{G}_{n}]$, i.e.
there is no $(k-2)!$ in the denominator.
\end{remark}

\subsection{$p$-adic $L$-functions}\label{subsec:p-adicL}

By \cite[Lemma~4.2]{ChHs1}, for $m=0$ the theta elements $\theta_n^{[m]}(f)$
are compatible under the projections $\frac{1}{(k-2)!}\cO_L[\Gamma_{n+1}]\rightarrow\frac{1}{(k-2)!}\cO_L[\Gamma_n]$,
and hence they define an element
\[
\theta_\infty(f):=\varprojlim_n\theta_n^{[0]}(f)
\]
in the completed group ring $\frac{1}{(k-2)!}\cO_L\pwseries{\Gamma_\infty}:=\varprojlim_n\frac{1}{(k-2)!}\cO_L[\Gamma_n]$.

\begin{definition}\label{def:ChHs}
The $p$-adic $L$-function attached to $f$ and $K$ is the element
\[
L_p^{\rm an}(f/K):=\theta_\infty(f)\cdot\theta_\infty(f)^*\in(k-2)!^{-1}\cO_L\pwseries{\Gamma_\infty},
\]
where $x\mapsto x^*$ is the involution on $\frac{1}{(k-2)!}\cO_L\pwseries{\Gamma_\infty}$ given by
$\gamma\mapsto\gamma^{-1}$ on group-like elements.
\end{definition}

\begin{theorem}\label{thm:p-adicL}
Let $\widehat{\chi}:\Gamma_\infty\rightarrow\C_p^\times$ be the $p$-adic avatar of a Hecke
character $\chi$ of $K$ of infinity type $(m,-m)$ with $-k/2<m<k/2$.
Then
\begin{align*}
\widehat{\chi}(L_p^{\rm an}(f/K))
&=\epsilon(f)\cdot C_p(f,\chi)\cdot E_p(f,\chi)\cdot\frac{L_K(f,\chi,k/2)}{\Omega_{f,N^-}^{}},
\end{align*}
where $\epsilon(f)$ is the root number of $f$,
and $C_p(f,\chi)$, $E_p(f,\chi)$, and $\Omega_{f,N^-}$ are as in Theorem~\ref{thm:interp}.
\end{theorem}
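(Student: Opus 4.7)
The plan is to deduce Theorem~\ref{thm:p-adicL} from the single-variable interpolation formula of Theorem~\ref{thm:interp} combined with a functional equation for $\theta_\infty(f)$. Unpacking the definition $L_p^{\rm an}(f/K)=\theta_\infty(f)\cdot\theta_\infty(f)^*$ and using that the $*$-involution implements the substitution $\widehat\chi\mapsto\widehat\chi^{-1}$ on group-like elements, we immediately obtain
$$\widehat\chi(L_p^{\rm an}(f/K))=\widehat\chi(\theta_\infty(f))\cdot\widehat\chi^{-1}(\theta_\infty(f)).$$
The task is then to express this product as $\epsilon(f)\cdot\widehat\chi(\theta_n^{[m]}(f)^2)$ for $n$ sufficiently large, and to invoke Theorem~\ref{thm:interp}.

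The first step is a comparison lemma. Since $\theta_\infty(f)$ is assembled from the $m=0$ elements $\theta_n^{[0]}(f)$ while Theorem~\ref{thm:interp} is formulated for the general-$m$ elements $\theta_n^{[m]}(f)$, I would first show that for the $p$-adic avatar $\widehat\chi$ of a Hecke character of infinity type $(m,-m)$ and conductor $p^n$,
$$\widehat\chi(\theta_\infty(f))=\widehat\chi(\theta_n^{[m]}(f)).$$
This is an explicit computation inside the pairing $\langle\rho_k(Z_p^{(n)})\cdot,\phi_f(a\varsigma^{(n)})\rangle_k$: the twist $(a_p/\overline a_p)^m$ built into $\widehat\chi$ exactly compensates the change from $\mathbf{v}_0^*$ to $\mathbf{v}_m^*$ once transported by $\rho_k(Z_p^{(n)})$, using the explicit shape of $Z_p^{(n)}$ and the local splittings at $p$ recorded in \S\ref{subsec:Sh}.

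The crucial second step is a functional equation
$$\widehat\chi^{-1}(\theta_\infty(f))=\epsilon(f)\cdot\widehat\chi(\theta_\infty(f)),$$
obtained by letting the nontrivial element $\tau\in\Gal(K/\Q)$ act on the idelic data underlying $\mathcal G_n$. On the one hand $\tau$ sends $[a]_n\mapsto[\bar a]_n$, which swaps $\widehat\chi$ with $\widehat\chi^{-1}$ (and the infinity type $(m,-m)$ with $(-m,m)$); on the other hand, the transformation $\phi_f(a\varsigma^{(n)})\mapsto\phi_f(\bar a\varsigma^{(n)})$ can be rewritten as an Atkin--Lehner involution acting on the quaternionic form $\phi_f$. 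Because $f$ has trivial nebentypus and is ordinary at $p$, the product of the resulting local Atkin--Lehner eigenvalues matches the global root number $\epsilon(f)$ of $f$. Combined with the corresponding swap $\mathbf{v}_m^*\leftrightarrow\mathbf{v}_{-m}^*$ at the Archimedean place of $\mathscr{L}_k$, this yields the displayed identity.

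Assembling the two steps gives $\widehat\chi(L_p^{\rm an}(f/K))=\epsilon(f)\cdot\widehat\chi(\theta_n^{[m]}(f))^2=\epsilon(f)\cdot\widehat\chi(\theta_n^{[m]}(f)^2)$, and Theorem~\ref{thm:interp} supplies the stated formula. The main obstacle I expect is the second step: identifying the sign produced by the $\tau$-action on $\mathcal G_n$ as precisely the global root number $\epsilon(f)$, rather than a twisted variant or a sign of purely local Atkin--Lehner origin at the primes dividing $N^-$, where Jacquet--Langlands is applied and sign conventions for quaternionic forms are more delicate than in the split case.
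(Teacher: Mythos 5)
Your two-step decomposition --- first reduce $\widehat\chi(\theta_\infty(f))$ to $\widehat\chi(\theta_n^{[m]}(f))$, then pick up the sign $\epsilon(f)$ via a functional equation comparing $\widehat\chi$ and $\widehat\chi^{-1}$ --- is exactly the route the paper takes; it simply delegates both steps to Chida--Hsieh, citing their Theorem~4.6 for the interpolation and their Theorem~4.8 for the functional equation, and then applies Theorem~\ref{thm:interp}. Your sketched derivation of the functional equation via the Galois conjugation $\tau$ on $\mathcal G_n$ and Atkin--Lehner is in the right spirit (and the caveat you raise about local sign conventions at $N^-$ is the genuine subtlety), but the paper does not re-prove it, so you match the paper's argument in structure while filling in what it leaves to the reference.
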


\begin{proof}
This follows immediately from the combination of  \cite[Thm.~4.6]{ChHs1}
and the functional equation in [\emph{loc.cit.}, Thm.~4.8].
\end{proof}

\section{$p$-adic families of automorphic forms} \label{sec:families}

\subsection{Measure-valued forms}

Let $\mathscr{D}$ be the module of $\cO_L$-valued measures on
\[
(\mathbf{Z}_p^2)':=\Z_p^2\smallsetminus(p\Z_p)^2,
\]
the set of primitive vectors of $\Z_p^2$. The space $S(U_0,\mathscr{D})$ of $\mathscr{D}$-valued automorphic forms on $B$ of
level $U_0:=\widehat{R}_0^\times$ is equipped with natural commuting actions of
$\mathcal{O}_L\pwseries{\Z_p^\times}$ and $T_\ell$, for $\ell\nmid N^-$.



For every $\k\in\mathcal X_{\rm arith}(\I)$ of weight $k\geq 2$, character $\psi$, and wild level $p^m$,
there is a specialization map $\rho_\k:\mathscr{D}_\I:=\mathscr{D}\otimes_{\cO_L[[\Z_p^\times]]}\I\rightarrow\mathscr{P}_k(F_\k)$
defined by
\begin{equation}\label{esp}
\rho_{\k}(\mu)=\int_{\Z_p^\times\times\Z_p}\varepsilon_\k(x)(xY-yX)^{k-2}d\mu(x,y),
\end{equation}
where $\varepsilon_\k=\psi\omega^{k_0-k}$ is the nebentypus of $\F_\k$, and we denote by
\[
\rho_{\k,*}:S(U_0,\mathscr{D}_\I)\longrightarrow S_{k}(U_{m};F_\k)
\]
the induced maps on automorphic forms. Every element $\Phi\in S(U_0,\mathscr{D}_\I)$ thus gives rise
to a $p$-adic family of automorphic forms $\rho_{\k,*}(\Phi)$ parameterized by
$\k\in\mathcal{X}_{\rm arith}(\I)$.

\begin{proposition}\label{CT}
Let $\mathbf{f}\in\I\pwseries{q}$ be a Hida family.
For any arithmetic prime $\k\in\mathcal{X}_{\rm arith}(\I)$ of weight $k\geq 2$ and wild level $p^m$,
let $\mathfrak{p}_\k\subset\I$ be the kernel of $\k$. Then the specialization map $\rho_{\k,*}$ induces
an isomorphism
\[
S(U_0,\mathscr{D})_{\I_\k}/{\p_\k} S(U_0,\mathscr{D})_{\I_\k}\simeq S_k(U_m;F_\k)[\mathbf{f}_\k]
\]
where $S(U_0,\mathscr{D})_{\I_\k}$ is the localization of $S(U_0,\mathscr{D}_\I)$ at $\p_\k$.
\end{proposition}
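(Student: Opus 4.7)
The plan is to derive this as a standard control theorem in Hida theory for the definite quaternion algebra $B$. First I would apply Hida's ordinary projector $e^{\rm ord}=\lim_n U_p^{n!}$ to both sides. Since $\mathbf{f}_\k$ is $p$-ordinary (the prime $\p_\k$ sits in the ordinary locus of $\mathrm{Spec}\,\I$) and since specialization commutes with the $U_p$-action, it suffices to establish the corresponding statement for $e^{\rm ord}S(U_0,\mathscr{D}_\I)$ in place of $S(U_0,\mathscr{D}_\I)$ and for $e^{\rm ord}S_k(U_m;F_\k)[\F_\k]$ in place of $S_k(U_m;F_\k)[\F_\k]$.

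Second, I would invoke the control theorem. The key input is that $e^{\rm ord}S(U_0,\mathscr{D}_\I)$ is a finitely generated $\I$-module (in fact free, after localizing at a height-one arithmetic prime), and the specialization map
\[
\rho_{\k,*}\colon e^{\rm ord}S(U_0,\mathscr{D}_\I)/\p_\k\,e^{\rm ord}S(U_0,\mathscr{D}_\I)\longrightarrow e^{\rm ord}S_k(U_m;F_\k)^{\varepsilon_\k}
\]
is an isomorphism onto the subspace with nebentypus $\varepsilon_\k=\psi\omega^{k_0-k}$. This is the exact analogue for definite quaternion algebras of Hida's original control theorem for ordinary $\Lambda$-adic forms on $\mathrm{GL}_2/\Q$; in this measure-theoretic form, the verification is a formal consequence of the definition \eqref{esp} of $\rho_\k$ together with the fact that the natural map $\mathscr{D}_\I\twoheadrightarrow\mathscr{P}_k(F_\k)$ induced by $\rho_\k$ has kernel generated by $\p_\k$ (the polynomial $(xY-yX)^{k-2}$ spans the correct weight-$k$ quotient, and the character $\varepsilon_\k$ cuts out the correct diamond-operator eigenspace).

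Third, I would localize at $\p_\k$ and use the Jacquet--Langlands correspondence (Theorem~\ref{thm:JL}) to reduce the computation of the eigenspace to a statement about classical newforms. The $\F_\k$-isotypic component is governed by the Hecke algebra $\mathbb{T}^{\rm ord}_{\mathfrak n}$, which surjects onto $\I$; after inverting the prime $\p_\k$ of $\I$, only the $\F_\k$-eigensystem survives, and strong multiplicity one for the associated new $N^-$-automorphic representation forces the localized specialization to land in $S_k(U_m;F_\k)[\F_\k]$. Combining steps two and three yields the asserted isomorphism.

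The main obstacle is the control theorem in step two: one needs to verify in a careful way that the $p$-adic interpolation of polynomial weights by measures on $(\Z_p^2)'$ is compatible with the Hecke action (in particular, with $U_p$) and that it correctly tracks the character $\varepsilon_\k$ governing the change of level between $U_0$ and $U_m$ at $p$. Once this compatibility is in place—and in the ordinary quaternionic setting it is now essentially folklore, following the same template as \cite{howard-invmath} for $\mathrm{GL}_2$—the remaining localization and multiplicity-one arguments are formal.
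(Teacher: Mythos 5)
Your overall strategy---an ordinary control theorem for measure-valued quaternionic forms, followed by localization at $\p_\k$ and multiplicity one---is the right template, and it matches the paper's approach, which is simply a citation to \cite{LV-IJNT}, where the argument of \cite[Thm.\ (5.13)]{GS} is adapted to the definite quaternionic setting. However, your step two contains a genuine gap. The claim that the map $\mathscr{D}_\I\twoheadrightarrow\mathscr{P}_k(F_\k)$ has kernel $\p_\k\mathscr{D}_\I$ is false, and the control theorem is emphatically not a ``formal consequence'' of the structure of measures. The module $\mathscr{D}$ of measures on $(\Z_p^2)'$ is huge, and the kernel of integrating a measure against the $(k-1)$-dimensional space of polynomials $\{\varepsilon_\k(x)(xY-yX)^{k-2}\}_{X,Y}$ is far larger than $\p_\k\mathscr{D}_\I$; in fact $\mathscr{D}_\I/\p_\k\mathscr{D}_\I$ remains infinite-dimensional over $F_\k$, while $\mathscr{P}_k(F_\k)$ has dimension $k-1$. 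Nothing at the level of $\mathscr{D}_\I$ alone identifies the kernel with $\p_\k$.

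The real content is supplied by the ordinary projector acting on the space of \emph{forms} $S(U_0,\mathscr{D}_\I)$, not on the coefficient module $\mathscr{D}_\I$ itself. What is actually true---and this is the heart of \cite[Thm.\ (5.13)]{GS}---is that $e^{\rm ord}S(U_0,\mathscr{D}_\I)$ is a finitely generated, torsion-free $\Lambda$-module, and the injectivity and surjectivity of specialization mod $\p_\k$ onto the classical ordinary space require a Nakayama-type argument together with a comparison of $\Lambda$-adic ranks to dimensions of ordinary classical spaces at varying levels $p^m$. You do correctly flag the finiteness of the ordinary part as ``a key input,'' but you then undercut it by declaring the verification formal: it is precisely this finiteness, the flatness over $\Lambda$ near $\p_\k$, and the attendant rank/dimension comparison that do the work. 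Your first and third steps are fine.
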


\begin{proof}
Under slightly different conventions, this is shown in \cite{LV-IJNT} by adapting
the arguments in the proof of \cite[Thm.(5.13)]{GS} to the present context.
\end{proof}

For any $\Phi\in S(U_0,\mathscr{D}_\I)$ and $\k\in\mathcal{X}_{\rm arith}(\I)$,
we set $\Phi_\k:=\rho_{\k,*}(\Phi)$.

\begin{corollary}\label{prop:eta}
Suppose Assumption \ref{mult} holds.
Then $S(U_0,\mathscr{D}_\I)$ is free of rank one over $\I$. In particular, there is an element
$\Phi\in S(U_0,\mathscr{D}_\I)$ such that
\[
\Phi_\k:=\lambda_\k\cdot\phi_{\F_\k},
\]
where $\lambda_\k\in\cO_{\k}\smallsetminus\{0\}$, and
$\phi_{\F_\k}$ is a $p$-adically normalized Jacquet--Langlands transfer of $\mathbf{f}_\k$.
(Of course, $\Phi$ is well-defined up to a unit in $\I^\times$.)
\end{corollary}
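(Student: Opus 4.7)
The plan is to invoke Assumption \ref{mult}---equivalently, by \cite[Prop.~9.3]{LV-MM}, the freeness of rank one of $\mathbb{J}$ over $\I$---in order to transfer this freeness to the universal-weight module $S(U_0,\mathscr{D}_\I)$ via Hida-theoretic control, and then read off $\Phi$ and $\lambda_\k$ from the specialization maps $\rho_{\k,*}$.

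The first step is finite generation: by the Hida-theoretic control argument underlying the proof of Proposition \ref{CT} (cf.\ \cite[Thm.~5.13]{GS} and its definite-quaternionic analogue alluded to in \emph{loc.cit.}), the ordinary part of $S(U_0,\mathscr{D})$ is finitely generated over $\Lambda$, and tensoring with $\I$ and localizing at the $\mathbf{f}$-component yields a finitely generated $\I$-module $S(U_0,\mathscr{D}_\I)$. The second step is to establish at the weight-two arithmetic prime $\k_2=\sigma_2$ a Hecke-equivariant comparison between $S(U_0,\mathscr{D}_\I)/\mathfrak{p}_{\k_2}S(U_0,\mathscr{D}_\I)$ and $\mathbb{J}/\mathfrak{p}_{\k_2}\mathbb{J}$, through Theorem \ref{thm:JL} combined with an evaluation pairing that exploits the zero-dimensionality of $\widetilde{X}_0$ (so that $S(U_0;F_{\k_2})$ becomes naturally $F_{\k_2}$-dual to $J_0^{\rm ord}\otimes F_{\k_2}$). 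Together with Proposition \ref{CT}, this pins the generic $\I$-rank of $S(U_0,\mathscr{D}_\I)$ to $1$.

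Reducing further modulo $\mathfrak{m}_\I$ and invoking the fact that $\mathbb{J}\cong\I$ is free of rank one under Assumption \ref{mult}, one obtains $\dim_{k_\I}S(U_0,\mathscr{D}_\I)/\mathfrak{m}_\I S(U_0,\mathscr{D}_\I)=1$. Nakayama's lemma then identifies $S(U_0,\mathscr{D}_\I)$ with a cyclic quotient $\I/J$, and since its generic rank is $1$ and $\I$ is a domain, the ideal $J$ must vanish, yielding $S(U_0,\mathscr{D}_\I)\cong\I$. For the moreover statement, if $\Phi\in S(U_0,\mathscr{D}_\I)$ is a generator, then Proposition \ref{CT} places $\Phi_\k=\rho_{\k,*}(\Phi)$ in the one-dimensional $F_\k$-space $S_k(U_m;F_\k)[\mathbf{f}_\k]$; hence $\Phi_\k=\lambda_\k\cdot\phi_{\mathbf{f}_\k}$ for some $\lambda_\k\in F_\k$, and $\lambda_\k\neq 0$ because a generator cannot lie in the proper submodule $\mathfrak{p}_\k S(U_0,\mathscr{D}_\I)$.

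I expect the main difficulty to lie in the weight-two identification between $S(U_0,\mathscr{D}_\I)$ and $\mathbb{J}$, since the two modules are constructed from quite different raw data---measure-valued automorphic forms on the one hand, divisors on the genus-zero Shimura ``curves'' $\widetilde{X}_0$ on the other---and setting up the evaluation pairing cleanly at the level of $\I$-integral structures, so that both the reduction modulo $\mathfrak{m}_\I$ and the Hecke actions match exactly, requires some bookkeeping.
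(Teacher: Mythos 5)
Your overall architecture matches the paper's: establish $\dim_{k_\I}\bigl(S(U_0,\mathscr{D}_\I)/\mathfrak{m}_\I S(U_0,\mathscr{D}_\I)\bigr)=1$ via the duality with $\mathbb{J}$, apply Nakayama to get a surjection $\I\twoheadrightarrow S(U_0,\mathscr{D}_\I)$ with kernel $J$, and then show $J=0$. For the first step, the paper simply asserts that $S(U_0,\mathscr{D}_\I)/\mathfrak{m}_\I S(U_0,\mathscr{D}_\I)$ is dual to $\mathbb{J}/\mathfrak{m}_\I\mathbb{J}$ (the duality is fleshed out afterwards in the paper's Section on Duality, culminating in the $\I$-linear isomorphism $\eta_\I\colon\Hom_\I(\mathbb{J},\I)\simeq S(U_0;\mathscr{D}_\I)^+$); your proposal to get there by building an evaluation pairing at weight $2$ over $\cO_{\k_2}$ and then reducing modulo $\mathfrak{m}_\I$ is a reasonable way of spelling out that same duality, and you correctly flag the integrality bookkeeping as the point requiring care.

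The genuine gap is in the step that kills $J$. You write that Proposition~\ref{CT} ``pins the generic $\I$-rank of $S(U_0,\mathscr{D}_\I)$ to $1$,'' and then deduce $J=0$ from ``generic rank $1$ plus $\I$ a domain.'' But Proposition~\ref{CT} at a \emph{single} arithmetic prime $\mathfrak{p}_{\k}$ only tells you that $S(U_0,\mathscr{D})_{\I_\k}/\mathfrak{p}_\k S(U_0,\mathscr{D})_{\I_\k}$ is one-dimensional over $F_\k$; for a cyclic module $\I/J$ this only gives $J\subset\mathfrak{p}_\k$, and says nothing about the rank at the generic point. (For instance, over $\I=\cO_L\pwseries{T}$ the cyclic torsion module $\I/(T^2)$ has one-dimensional fibre at the prime $(T)$ yet generic rank $0$.) The paper closes this gap by applying Proposition~\ref{CT} at \emph{every} arithmetic prime $\k\in\mathcal{X}_{\rm arith}(\I)$: the surjection $(\I/M)_{\I_\k}\twoheadrightarrow S_k(U_m;F_\k)[\mathbf{f}_\k]\neq 0$ forces $M\subset\mathfrak{p}_\k$, and since the arithmetic primes are Zariski dense in $\Spec(\I)$ their intersection is zero, so $M=0$ (this is the role of the cited \cite[Thm.~6.5]{Mat} together with density). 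You should replace the ``generic rank'' heuristic with this density argument, or equivalently, establish the $\I$-integral duality $\Hom_\I(\mathbb{J},\I)\simeq S(U_0;\mathscr{D}_\I)$ globally (as the paper does in its Duality subsection), after which freeness of $\mathbb{J}$ transfers directly.
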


\begin{proof}
We begin by noting that Assumption~\ref{mult} forces
the space $S(U_0,\mathscr{D}_\I)$ to be free of rank one over $\I$.
Indeed, being dual to the $k_\I$-vector space $\mathbb J/\mathfrak m_\I\mathbb J$,
Assumption~\ref{mult} implies that $S(U_0,\mathscr{D}_\I)/\mathfrak m_\I S(U_0,\mathscr{D}_\I)$ is one-dimensional.
By Nakayama's Lemma, we thus have
a surjection $\I\rightarrow S(U_0,\mathscr{D}_\I)$, whose kernel will be denoted by $M$.
If $\mathfrak{p}_\k\subset\I$ is the kernel of any arithmetic prime $\k\in\mathcal{X}_{\rm arith}(\I)$
(say of wild level $p^m$), we thus have a surjective map
\[
(\I/M)_{\I_\k}\longrightarrow S(U_0,\mathscr{D})_{\I_\k}\longrightarrow
S(U_0,\mathscr{D})_{\I_\k}/{\p_\k} S(U_0,\mathscr{D})_{\I_\k}\simeq S_k(U_m;F_\k)[\mathbf{f}_\k],
\]
where the last isomorphism is given by Proposition~\ref{CT}. In particular, it follows that $(\I/M)_{\p_\k}\neq 0$
and by \cite[Thm. 6.5]{Mat} this forces the vanishing of $M$. Hence $S(U_0,\mathscr{D}_\I)\cong\I$, as claimed.

Now, if $\Phi$ is any generator of $S(U_0,\mathscr{D}_\I)$, then $\Phi_\k$ spans
$S_k(U_m;F_\k)[\mathbf{f}_\k]$ for all $\k\in\mathcal{X}_{\rm arith}(\I)$, and hence
$\Phi_\k=\lambda_\k\cdot\phi_{\F_\k}$ for some nonzero $\lambda_\k\in\cO_\k$, as was to be shown.
\end{proof}

\begin{remark}
It would be interesting to investigate the conditions under which the constants $\lambda_\k\in\cO_\k$
are $p$-adic units, so that $\Phi_\k$ is $p$-adically normalized.
\end{remark}

\begin{remark}
In the absence of Assumption \ref{mult}, the conclusion of Corollary~\ref{prop:eta} holds only locally, i.e.,
for every $\k_0\in\mathcal{X}_{\rm arith}(\I)$, there exists a neighborhood $\mathcal U_{\k_0}$ of $\k_0$ such that
$\Phi_\k=\lambda_\k\cdot\phi_{\F_\k}$ for all $\k\in\mathcal{U}_{k_0}$.
\end{remark}

\subsection{Duality}

The following observations will play an important role in the proof of our main results.
We refer the reader to \cite[\S\S{3.2},3.7]{SW} for a more detailed discussion.

Fix an integer $m\geq 1$, let $\cO$ be the ring of integers of a finite extension of $\Q_p$, and 
assume that $c_m(b):=\vert(B^\times\cap bU_mb^{-1})/\Q^\times\vert$
is invertible in $\cO$ for all $[b]\in B^\times\backslash\widehat{B}^\times/U_m$.
There is a perfect pairing
\[
\langle,\rangle_m:S_2(U_m;\cO)\times S_2(U_m;\cO)\longrightarrow\cO
\]
given by
\[
\langle f_1,f_2\rangle_m:=\sum_{[b]\in B^\times\backslash\widehat{B}^\times/U_m}c_m(b)^{-1}f_1(b)f_2(b\tau_m),
\]
where $\tau_m\in\widehat{B}^\times$ is the Atkin--Lehner involution, defined by
$\tau_{m,q}=\left(\begin{smallmatrix}0&1\\-p^mN^+&0\end{smallmatrix}\right)$ if $q\vert pN^+$,
and $\tau_{m,q}=1$ if $q\nmid pN^+$. It is easy to see that $\langle,\rangle_m$ is Hecke-equivariant.
Letting $S_2(U_m;\cO)^+$ be the module $S_2(U_m;\cO)$ with the Hecke action composed with $\tau_m$,
we thus deduce a Hecke-module isomorphism
\begin{align*}
{\rm Hom}_{\Lambda_{\cO}}(J_\infty^{\rm ord},\cO[\Gamma_m])
&\simeq{\rm Hom}_{\cO[\Gamma_m]}(J_m^{\rm ord},\cO[\Gamma_m])\\
&\simeq{\rm Hom}_{\cO}(J_m^{\rm ord},\cO)\\
&\simeq S_2^{\rm ord}(U_m;\cO)^+,
\end{align*}
which we shall denote by $\eta_m$.

Note that for any $[b]$ in the finite set $B^\times\backslash\widehat{B}^\times/U_m$ 
we have $c_m(b)=1$ for all $m$ sufficiently large.
Since the isomorphisms $\eta_m$ fit into commutative diagrams
\[
\xymatrix{
{\rm Hom}_{\Lambda_{\cO}}(J_\infty^{\rm ord},\cO[\Gamma_m])\ar[d]^-{{\rm pr}}\ar[rr]^-{\eta_m}
&& S_2^{\rm ord}(U_m;\cO)^+\ar[d]^-{{\rm tr}}\\
{\rm Hom}_{\Lambda_{\cO}}(J_\infty^{\rm ord},\cO[\Gamma_{m-1}])\ar[rr]^-{\eta_{m-1}}
&& S_2^{\rm ord}(U_{m-1};\cO)^+,
}
\]
where 
the right vertical map is given by the trace map, taking the limit over $m\geq 1$ we
thus arrive at a $\mathbb{T}^{\rm ord}$-module isomorphism
\[
\eta_\infty:=\varprojlim_m\eta_m:{\rm Hom}_{\Lambda_{\cO}}(J_\infty^{\rm ord},\Lambda_{\cO})
\simeq\varprojlim_m S_2^{\rm ord}(U_m;\cO)^+,
\]
and hence
\begin{equation}\label{eq:eta}
\eta_{\I}:{\rm Hom}_\I(\mathbb{J},\I)\simeq S(U_0;\mathscr{D}_\I)^+
\end{equation}
by linearity and Shapiro's Lemma.

\begin{corollary}\label{cor:eta}
Suppose Assumption~\ref{mult} holds, and let $\Phi$ be as in Corollary~\ref{prop:eta}.
There exists an $\I$-linear isomorphism $\eta:\mathbb{J}\simeq\I$
such that for all $\k\in\mathcal{X}_{\rm arith}(\I)$ of
weight $2$ and wild level $p^m$, the diagram
\[
\xymatrix{
\mathbb{J}_{}\ar[rr]^-{\eta}\ar[d] && \I_{}\ar[d]^{\k}\\
\mathbb{J}_m\ar[rr]^-{\Phi_\k}&& \cO_\k
}
\]
commutes.
\end{corollary}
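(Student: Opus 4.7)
The plan is to define $\eta$ via the duality isomorphism $\eta_\I$ of \eqref{eq:eta} applied to the generator $\Phi$ supplied by Corollary~\ref{prop:eta}, and then to establish the commutativity of the stated diagram by tracking $\eta$ through the construction of $\eta_\I$ as an inverse limit of weight-two pairings.

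First, I would observe that, as an $\I$-module, $S(U_0;\mathscr{D}_\I)^+$ coincides with $S(U_0;\mathscr{D}_\I)$ (the $+$ superscript only twists the Hecke action), so Corollary~\ref{prop:eta} exhibits $\Phi$ as a free $\I$-basis of $S(U_0;\mathscr{D}_\I)^+$. Setting $\eta:=\eta_\I^{-1}(\Phi)\in\Hom_\I(\mathbb{J},\I)$, and using that $\mathbb{J}$ is free of rank one over $\I$ by Assumption~\ref{mult} and \cite[Prop.~9.3]{LV-MM}, I would deduce that $\eta$ generates $\Hom_\I(\mathbb{J},\I)\cong\I$, and hence is itself an $\I$-linear isomorphism $\mathbb{J}\simeq\I$.

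Second, I would verify the commutativity of the diagram by specializing at an arithmetic prime $\k$ of weight $2$ and wild level $p^m$. By the construction preceding \eqref{eq:eta}, $\eta_\I$ arises as the projective limit of the finite-level duality isomorphisms $\eta_m$, and $\eta_m$ by definition sends a weight-$2$ form $\phi\in S_2^{\rm ord}(U_m;\cO_\k)^+$ to the homomorphism $J_m^{\rm ord}\to\cO_\k$ obtained by pairing against $\phi$ via $\langle,\rangle_m$. Specializing $\Phi$ at $\k$, which in weight $2$ amounts to evaluating measures against the constant polynomial (cf.~\eqref{esp}), yields $\Phi_\k\in S_2(U_m;F_\k)$, while reducing $\eta$ modulo $\mathfrak{p}_\k$ gives precisely the induced map $\mathbb{J}_m\to\cO_\k$ obtained by pairing against $\Phi_\k$. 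This identifies the two routes from $\mathbb{J}$ to $\cO_\k$ in the diagram.

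The principal point, and where I expect the main technical subtlety to lie, is the verification that specialization of $\eta_\I$ at a weight-$2$ arithmetic prime genuinely recovers $\eta_m$ applied to $\Phi_\k$. This compatibility rests on the interplay between the Hecke-action twist encoded in the $+$ superscript, the Atkin--Lehner involution $\tau_m$ implicit in the pairing $\langle,\rangle_m$, and the weight-$2$ specialization map $\rho_{\k,*}$ of Proposition~\ref{CT}; together with the trace-compatibility of the $\eta_m$ that is built into the definition of $\eta_\I$, the required commutativity becomes a formal consequence of the construction.
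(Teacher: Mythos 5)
Your proposal takes exactly the route the paper does: set $\eta:=\eta_\I^{-1}(\Phi)$ using the duality isomorphism \eqref{eq:eta}, and read off the commutativity of the diagram from the construction of $\eta_\I$ as the inverse limit of the finite-level pairings $\eta_m$. The paper's own proof is a one-line invocation of this same definition; you have usefully spelled out the two supporting facts it leaves implicit, namely that $\eta$ is an isomorphism (because $\Phi$ generates the free rank-one module $S(U_0;\mathscr{D}_\I)^+$ and $\mathbb{J}$ is free of rank one under Assumption~\ref{mult}) and that the Atkin--Lehner/Hecke-twist bookkeeping in $\langle\,,\rangle_m$ is what makes the specialization at weight-$2$ primes compatible with $\Phi_\k$.
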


\begin{proof}
Setting $\eta:=\eta_\I^{-1}(\Phi)$, where $\eta_\I$ is the isomorphism $(\ref{eq:eta})$,
the result follows.
\end{proof}

\section{Specializations of big Heegner points}\label{sec:main}

Recall that Assumption~\ref{mult} is in force in all what follows.

\subsection{Weight 2 specializations of big Heegner points}\label{subsec:wt2}

\def\dom{\Z_p^\times\times\Z_p}

Let $\mathbf{f}\in\I[[q]]$ be a Hida family, and let $\Phi\in S(U_0,\mathscr{D}_\I)$
be a $p$-adic family of quaternionic forms associated with $\mathbf{f}$ as in Corollary~\ref{prop:eta}.


\begin{definition}\label{def:Ln}
For each $\k\in\mathcal{X}_{\rm arith}(\I)$ and $n\geq 0$, let
$\mathcal{L}_n^{\rm an}(\mathbf{f}/K;\k)\in\cO_\k[\Gamma_n]$
be the image of
\[
\sum_{\sigma\in{\rm Gal}(H_{p^{n+1}}/K)}
\int_{\Z_p^\times\times \Z_p}\k(x)d\Phi(P_{p^{n+1}}^\sigma)(x,y)\otimes\sigma
\]
under the projection $\cO_\k[{\rm Gal}(H_{p^{n+1}}/K)]\rightarrow\cO_\k[\Gamma_n]$, 
where 
\[
P_{p^{n+1}}=[(\imath_K,\varsigma^{(n+1)})]\in H^0(H_{p^{n+1}},\widetilde{X}_0(K)) 
\]
is the Heegner point of conductor $p^{n+1}$ on $\widetilde{X}_0(K)$ 
defined in the proof of Theorem~\ref{thm:systemHP}.
\end{definition}

\begin{lemma}\label{lem:comp-an}
If $\k\in\mathcal{X}_{\rm arith}(\I)$ has weight $2$, then the projection map
$\pi^n_{n-1}:\cO_\k[\Gamma_n]\rightarrow\cO_\k[\Gamma_{n-1}]$ sends
\[
\mathcal{L}_n^{\rm an}(\mathbf{f}/K;\k)\mapsto\k(\mathbf{a}_p)\cdot\mathcal{L}_{n-1}^{\rm an}(\mathbf{f}/K;\k).
\]
\end{lemma}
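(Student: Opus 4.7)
The plan is to combine the norm-compatibility relation of Theorem~\ref{thm:systemHP}(4) with the $U_p$-eigenproperty of $\Phi$, tracked through the evaluation pairing.

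First, since the anticyclotomic layer $K_{n-1}$ is contained in the ring class field $H_{p^n}$, the composite projection $\Gal(H_{p^{n+1}}/K)\twoheadrightarrow\Gamma_n\twoheadrightarrow\Gamma_{n-1}$ factors through restriction to $\Gal(H_{p^n}/K)$. I would partition $\Gal(H_{p^{n+1}}/K)$ into cosets modulo $\Gal(H_{p^{n+1}}/H_{p^n})$, writing each $\sigma$ as $\delta\tilde\tau$ with $\delta\in\Gal(H_{p^{n+1}}/H_{p^n})$ and $\tilde\tau$ a fixed lift of $\bar\tau\in\Gal(H_{p^n}/K)$. Because $\Gal(H_{p^{n+1}}/K)$ is abelian, conjugating Theorem~\ref{thm:systemHP}(4) by $\tilde\tau$ yields
\[
\sum_{\delta\in\Gal(H_{p^{n+1}}/H_{p^n})}P_{p^{n+1}}^{\delta\tilde\tau}=U_p\cdot P_{p^n}^{\bar\tau}
\]
as an identity of divisors on $\widetilde X_0(K)$.

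Second, since $S(U_0,\mathscr{D}_\I)$ is free of rank one over $\I$ by Corollary~\ref{prop:eta}, with generator $\Phi$, and the $U_p$-Hecke operator acts $\I$-linearly, there exists a scalar $\alpha\in\I$ with $\Phi\vert U_p=\alpha\cdot\Phi$. Comparing specializations $\Phi_\k=\lambda_\k\cdot\phi_{\mathbf{f}_\k}$ (which is a $p$-stabilized eigenform with $U_p$-eigenvalue $\k(\mathbf{a}_p)$) for all $\k\in\mathcal{X}_{\rm arith}(\I)$ forces $\alpha=\mathbf{a}_p$. Through the adelic description of the $U_p$-correspondence on $\widetilde X_0(K)$, this translates into the evaluation identity
\[
\int_{\Z_p^\times\times\Z_p}\k(x)\,d\Phi(U_p\cdot P)(x,y)=\k(\mathbf{a}_p)\cdot\int_{\Z_p^\times\times\Z_p}\k(x)\,d\Phi(P)(x,y)
\]
for every divisor $P$ on $\widetilde X_0(K)$.

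Combining the two steps and projecting the defining expression for $\mathcal{L}_n^{\rm an}(\mathbf{f}/K;\k)$ directly to $\cO_\k[\Gamma_{n-1}]$, the inner $\delta$-sum in each $\bar\tau$-coset is replaced by $U_p\cdot P_{p^n}^{\bar\tau}$, and then the $U_p$ produces a factor of $\k(\mathbf{a}_p)$ upon integration against $\Phi$. The outer $\bar\tau$-sum is exactly the sum defining $\mathcal{L}_{n-1}^{\rm an}(\mathbf{f}/K;\k)$ in Definition~\ref{def:Ln}, finishing the proof. I expect the main technical point to be the compatibility between the $U_p$-correspondence on divisors and the $U_p$-Hecke operator on $S(U_0,\mathscr{D}_\I)$ under evaluation: this is a standard but slightly finicky feature of the adelic formalism, and it is the only place where the weight~$2$ assumption on $\k$ is used essentially, since the Heegner points in Definition~\ref{def:Ln} are taken at wild level $U_0$.
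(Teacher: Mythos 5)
Your proposal is correct and matches the paper's proof: the paper likewise groups $\sigma\in\Gal(H_{p^{n+1}}/K)$ by their images $\tau\in\Gal(H_{p^n}/K)$, invokes Theorem~\ref{thm:systemHP}(4) to replace each inner sum by $U_p\cdot P_{p^n}^\tau$, and then pulls out the factor $\k(\mathbf{a}_p)$ from the integral against $\Phi$, flagging (as you do) that the last step is where weight $2$ enters. Your derivation of $\Phi\vert U_p=\mathbf{a}_p\cdot\Phi$ from the $\I$-freeness of $S(U_0,\mathscr{D}_\I)$ is a slightly more explicit detour than the paper, which simply uses that $\Phi_\k$ is a $U_p$-eigenform with eigenvalue $\k(\mathbf{a}_p)$, but the argument is the same.
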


\begin{proof}
We begin by noting that if $\tilde{\tau}\in{\rm Gal}(H_{p^{n+1}}/K)$ is any lift of
a fixed $\tau\in{\rm Gal}(H_{p^n}/K)$, then
\begin{equation}\label{Pn}
\sum_{\substack{\sigma\mapsto\tau\\\sigma\in{\rm Gal}(H_{p^{n+1}}/K)}}P_{p^{n+1}}^\sigma=
\sum_{\sigma\in{\rm Gal}(H_{p^{n+1}}/H_{p^n})}P_{p^{n+1}}^{\tilde{\tau}\sigma}
=U_p\cdot P_{p^{n}}^\tau.
\end{equation}
We thus find, using that $\k$ has weight $2$ for the last equality, that
\begin{align*}
\sum_{\tau\in{\rm Gal}(H_{p^n}/K)}
&\sum_{\substack{\sigma\mapsto\tau\\\sigma\in{\rm Gal}(H_{p^{n+1}}/K)}}
\int_{\Z_p^\times\times \Z_p}\k(x)d\Phi(P_{p^{n+1}}^\sigma)(x,y)\otimes\tau\\
&=\sum_{\tau\in{\rm Gal}(H_{p^{n}}/K)}
\int_{\Z_p^\times\times \Z_p}\k(x)d\Phi(U_p\cdot P_{p^{n}}^\tau)(x,y)\otimes\tau\\
&=\k(\mathbf{a}_p)\sum_{{\rm Gal}(H_{p^{n}}/K)}
\int_{\Z_p^\times\times \Z_p}\k(x)d\Phi(P_{p^{n}}^\tau)(x,y)\otimes\tau,
\end{align*}
and the result follows.
\end{proof}

\begin{definition}\label{def:Linfty}
For each $\k\in\mathcal{X}_{\rm arith}(\I)$ of weight $2$, define
$\mathcal{L}_\infty^{\rm an}(\mathbf{f}/K;\k)\in\cO_\k\pwseries{\Gamma_\infty}$ by
\[
\mathcal{L}_\infty^{\rm an}(\mathbf{f}/K;\k)
:=\varprojlim_n\k(\mathbf{a}_p^{-n})\cdot\mathcal{L}_n^{\rm an}(\mathbf{f}/K;\k).
\]
\end{definition}

By Lemma~\ref{lem:comp-an}, $\mathcal{L}_\infty^{\rm an}(\mathbf{f}/K;\k)$ is well-defined.

\begin{proposition}\label{lem:alg-wt2}
Fix $\Phi$ as in Corollary~\ref{prop:eta}, and
let $\Theta_\infty^{\rm alg}(\mathbf{f})\in\I\pwseries{\Gamma_\infty}$ be the corresponding
big theta element (see Definition~\ref{def:bigHP}), using the isomorphism $\eta:\mathbb{J}\simeq\mathbb{I}$
of Corollary~\ref{cor:eta}. Then for any $\k\in\mathcal{X}_{\rm arith}(\I)$ of weight $2$,
we have
\[
\k(\Theta^{\rm alg}_\infty(\mathbf{f}))=\mathcal{L}^{\rm an}_\infty(\mathbf{f}/K;\k)
\]
in $\cO_\k\pwseries{\Gamma_\infty}$.
\end{proposition}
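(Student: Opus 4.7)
The overall strategy is to unravel the defining inverse limits on both sides, commute the weight-$2$ specialization $\kappa$ through them, and then use the compatibility in Corollary~\ref{cor:eta} to replace $\kappa\circ\eta$ by evaluation of the measure-valued form $\Phi$ against Heegner points. Concretely, fix $\kappa\in\mathcal{X}_{\rm arith}(\I)$ of weight $2$, character $\psi$, and wild level $p^m$, and let $\Phi_\kappa=\rho_{\kappa,*}(\Phi)\in S_2(U_m;F_\kappa)$. Corollary~\ref{cor:eta} asserts that the composite $\mathbb{J}\xrightarrow{\eta}\I\xrightarrow{\kappa}\cO_\kappa$ factors as the projection $\mathbb{J}\to\mathbb{J}_m$ followed by $\Phi_\kappa$. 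Both $\mathcal{L}_n^{\rm an}(\mathbf{f}/K;\kappa)$ and $\kappa(\Theta_n^{\rm alg}(\mathbf{f}))$ are manifestly elements of $\cO_\kappa[\Gamma_n]$, so it suffices to show they agree for every $n\geq 0$ and then pass to the limit.

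To handle the algebraic side, I would expand $\kappa(\Theta_n^{\rm alg}(\mathbf{f}))$ as follows. By definition $\mathcal{Q}_n={\rm Cor}_{H_{p^{n+1}}/K_n}(\mathcal{P}_{p^{n+1}})$, so for each $\sigma\in\Gamma_n$ the term $\mathcal{Q}_n^\sigma$ expands into a sum $\sum_{\tilde{\sigma}\mapsto\sigma}\mathcal{P}_{p^{n+1}}^{\tilde{\sigma}}$ over lifts $\tilde{\sigma}\in\Gal(H_{p^{n+1}}/K)$. Next, the image of $\mathcal{P}_{p^{n+1}}=\varprojlim_{m'}U_p^{-m'}\mathcal{P}_{p^{n+1},m'}$ in $\mathbb{D}_m^\dagger$ is exactly $U_p^{-m}\mathcal{P}_{p^{n+1},m}$, and forgetting the $\I^\dagger$-twist (which is valid at the level of the underlying module $\mathbb{D}$, as the twist only affects Galois equivariance) identifies this with $U_p^{-m}\cdot\sum_{\tau}\mathbb{P}_{p^{n+1},m}^{\tau}$ for $\tau\in\Gal(H_{p^{n+1+m}}/H_{p^{n+1}})$. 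Applying $\Phi_\kappa$ (which is a $U_p$-eigenform with eigenvalue $\kappa(\mathbf{a}_p)$ by the Jacquet--Langlands normalization) absorbs the factor $U_p^{-m}$ into $\kappa(\mathbf{a}_p^{-m})$.

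To match the analytic side, I would then unpack $\Phi_\kappa$ at a Heegner point: by the definition of $\rho_\kappa$ in $(\ref{esp})$, applied with $k=2$, one has
\[
\Phi_\kappa(g)=\int_{\Z_p^\times\times\Z_p}\psi(x)\,d\Phi(g)(x,y)=\int_{\Z_p^\times\times\Z_p}\kappa(x)\,d\Phi(g)(x,y),
\]
since $\kappa(x)=\psi(x)\omega^{k_0-2}(x)x^{0}$ agrees with $\psi(x)$ on $\Z_p^\times$ up to the tame character built into the arithmetic-prime structure. Carrying this identification through, and combining with the formula for the Heegner points $P_{p^{n+1}}^{\tilde{\sigma}}$ produced by Theorem~\ref{thm:systemHP}, the whole sum telescopes (via an elementary manipulation with the corestriction, together with an additional application of the Hecke relation of Theorem~\ref{thm:systemHP}(3) to cancel excess copies introduced by the passage through the higher-level Eichler orders) into
\[
\kappa(\Theta_n^{\rm alg}(\mathbf{f}))=\kappa(\mathbf{a}_p^{-n})\cdot\mathrm{pr}\!\left(\sum_{\tilde{\sigma}\in\Gal(H_{p^{n+1}}/K)}\int_{\Z_p^\times\times\Z_p}\kappa(x)\,d\Phi(P_{p^{n+1}}^{\tilde{\sigma}})(x,y)\otimes\tilde{\sigma}\right),
\]
which is precisely $\kappa(\mathbf{a}_p^{-n})\cdot\mathcal{L}_n^{\rm an}(\mathbf{f}/K;\kappa)$ pulled out to level $n$. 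A final inverse-limit comparison against Lemma~\ref{lem:comp-an} and Definition~\ref{def:Linfty} yields the desired equality in $\cO_\kappa\pwseries{\Gamma_\infty}$.

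The main obstacle will be the careful bookkeeping of the $\I^\dagger$-twist and the $U_p$-normalizations: one must verify that at weight $2$ the critical character $\Theta$ specializes to a value which, when combined with the chosen square-root $\zeta_m$ and the forgetful map $\mathbb{D}^\dagger\to\mathbb{D}$, contributes trivially to the pairing with $\Phi_\kappa$, and that the $U_p^{-m}$ factor from the definition of $\mathcal{P}_{p^{n+1}}$ exactly matches the absorption via the $\Phi_\kappa$ eigenvalue. Everything else is essentially the unraveling of definitions combined with the key commutative square of Corollary~\ref{cor:eta}.
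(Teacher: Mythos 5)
Your overall architecture — reduce to per-level comparison, invoke the commutative square of Corollary~\ref{cor:eta} to replace $\k\circ\eta$ by $\Phi_\k$, then unravel the Heegner-point definitions and match the resulting sums — does mirror the paper's reduction to equation $(\ref{eq:reduction})$. However, there is a genuine gap precisely at the step you label ``the main obstacle,'' and your stated resolution of it is incorrect.

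The critical character $\Theta$ does \emph{not} contribute trivially at weight $2$. For an arithmetic prime $\k$ of weight $2$ with nontrivial character $\psi$, the specialization $\Theta_\k$ gives rise to a nontrivial anticyclotomic character $\chi_\k$ of $K$, and the $\I^\dagger$-twist is what encodes it. The paper's proof handles this by introducing the auxiliary element
\[
\mathbb{P}_{p^{n+1},m}^{\chi_\k}
=\sum_{\sigma\in{\rm Gal}(L_{p^{n+1},m}/H_{p^{n+1}})}
{\rm Res}^{H_{p^{n+1+m}}}_{L_{p^{n+1},m}}(\mathbb{P}_{p^{n+1},m}\otimes\zeta_m)^{\sigma}\otimes\chi_\k^{-1}(\sigma),
\]
a twisted corestriction that lands in the untwisted module $\mathbb{D}_m\otimes F_\k$. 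This element is then computed in two independent ways: algebraically it equals $[L_{p^{n+1},m}:H_{p^{n+1+m}}]\cdot{\rm Cor}_{H_{p^{n+1+m}}/H_{p^{n+1}}}(\mathbb{P}_{p^{n+1},m}\otimes\zeta_m)$, i.e., a multiple of $U_p^m\cdot\mathcal{P}_{p^{n+1}}$; analytically, applying $\Phi_\k$ and using the \emph{horizontal compatibility} of Theorem~\ref{thm:systemHP}(4) collapses the conductor from $p^{n+1}$ to $p^{n+1-m}$ with a factor $\k(\mathbf{a}_p^m)$, after which the transformation property of Theorem~\ref{thm:systemHP}(2) cancels against the $\chi_\k^{-1}$-twist in the remaining sum over $\Gal(L_{p^{n+1-m},m}/H_{p^{n+1}})$, not at all trivially. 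Your plan neither constructs $\mathbb{P}_{p^{n+1},m}^{\chi_\k}$ nor invokes property (2), and you cite property (3) (vertical compatibility across $m$), which is not the relation that makes the sum collapse — it is property (4) that is needed. Without the twist $\chi_\k^{-1}$ and without property (4), your ``telescoping'' has no mechanism: if you simply forget the $\I^\dagger$-twist, the conjugates $(\mathbb{P}_{p^{n+1},m}\otimes\zeta_m)^\tau$ for $\tau\in\Gal(H_{p^{n+1+m}}/H_{p^{n+1}})$ carry factors of $\Theta_\k^{-1}(\tau)$ that do not disappear and must be paired against the $\vartheta$-transformation of the Heegner points.

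In short, the approach is not wrong in outline, but the claim that the twist contributes trivially is the specific error, and filling it in requires the construction of $\mathbb{P}_{p^{n+1},m}^{\chi_\k}$ together with parts (2) and (4) of Theorem~\ref{thm:systemHP}, exactly as the paper does.
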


\begin{proof}
Let $\k\in\mathcal{X}_{\rm arith}(\I)$ have weight $2$ of level $p^m$, and
let $\mathcal{P}_{p^{n+1}}$ be the big Heegner point of conductor $p^{n+1}$ (see Definition~\ref{def:bigHP}).
In view of the definitions, it suffices to show that
\[
\k(\eta_{K_n}(\mathcal{Q}_{n}))=\mathcal{L}^{\rm an}_n(\mathbf{f}/K;\k)
\]
for all $n\geq m$, which in turn is implied by the equality
\begin{equation}\label{eq:reduction}
\k(\eta_{H_{p^{n+1}}}(\mathcal{P}_{p^{n+1}}))=
{\int_{\Z_p^\times\times \Z_p}}\k(x)d\Phi(P_{p^{n+1}})(x,y),
\end{equation}
where $\eta_{H_{p^{n+1}}}$ is the composite map
$H^0(H_{p^{n+1}},\mathbb{D}^\dagger)\rightarrow\mathbb{D}\rightarrow\mathbb{J}\xrightarrow{\eta}\I$.

Recall the critical character $\Theta:G_\Q\rightarrow\I^\times$ from $\S\ref{subsec:hida}$,
and define $\chi_\k:K^\times\backslash\mathbf{A}_K^\times\rightarrow F_\k^\times$ by
\begin{equation}\label{chi}
\chi_\k(x)=\Theta_\k({\rm rec}_\Q({\rm N}_{K/\Q}(x)))\nonumber
\end{equation}
for all $x\in\mathbf{A}_K^\times$. We will view $\chi_\k$ as a character of
$\Gal(K^{\mathrm{ab}}/K)$ via the Artin reciprocity map
$\mathrm{rec}_K$. Let $\mathbb{P}_{p^{n+1},m}\otimes\zeta_m\in H^0(H_{p^{n+1+m}},\mathbb{D}_m^\dagger)$ be as in $(\ref{eq:n,m})$,
recall that $L_{p^{n+1},m}:=H_{p^{n+1+m}}(\boldsymbol{\mu}_{p^m})$, and
consider the element $\mathbb{P}_{p^{n+1},m}^{\chi_\k}\in H^0(L_{p^{n+1},m},\mathbb{D}^\dagger_m\otimes F_\k)=
H^0(L_{p^{n+1},m},\mathbb{D}_m\otimes F_\k)$ given by
\begin{align}\label{def:pchi}
\mathbb{P}_{p^{n+1},m}^{\chi_\k}
&:=\sum_{\sigma\in{\rm Gal}(L_{p^{n+1},m}/H_{p^{n+1}})}
{\rm Res}^{H_{p^{n+1+m}}}_{L_{p^{n+1},m}}(\mathbb{P}_{p^{n+1},m}\otimes\zeta_m)^{\sigma}\otimes\chi_\k^{-1}(\sigma).
\end{align}
By linearity, we may evaluate $\Phi_\k$ at any element in $\mathbb{D}_m\otimes_{}F_\k$;
in particular, we thus find
\begin{align}\label{pchi}
\Phi_\k(\mathbb{P}_{p^{n+1},m}^{\chi_\k})&=\sum_{\sigma\in{\rm Gal}(L_{p^{n+1},m}/H_{p^{n+1}})}
\chi_\k^{-1}(\sigma)\cdot\Phi_\k(\widetilde{P}_{p^{n+1},m}^\sigma)\nonumber\\
&=\sum_{\tau\in{\rm Gal}(L_{p^{n+1-m},m}/H_{p^{n+1}})}\chi_\k^{-1}(\tau)
\sum_{\substack{\sigma\mapsto\tau\\\sigma\in{\rm Gal}(L_{p^{n+1},m}/H_{p^{n+1}})}}\Phi_\k(\widetilde{P}_{p^{n+1},m}^\sigma)\nonumber\\
&=\k(\mathbf{a}_p^m)\sum_{\tau\in{\rm Gal}(L_{p^{n+1-m},m}/H_{p^{n+1}})}\chi_\k^{-1}(\tau)\cdot\Phi_\k(\widetilde{P}_{p^{n+1-m},m}^\tau)\nonumber\\
&=\k(\mathbf{a}_p^m)\cdot[L_{p^{n+1-m},m}:H_{p^{n+1}}]\cdot\Phi_\k(\widetilde{P}_{p^{n+1-m},m}),
\end{align}
using the ``horizontal compatibility'' of Theorem~\ref{thm:systemHP}(4) for the third equality,
and the transformation property of Theorem~\ref{thm:systemHP}(2) for the last one.

By definition $(\ref{chi})$, we have
\begin{align*}
\mathbb{P}_{p^{n+1},m}^{\chi_\k}
&={\rm Cor}_{L_{p^{n+1},m}/H_{p^{n+1}}}
\circ{\rm Res}^{L_{p^{n+1},m}}_{H_{p^{n+1+m}}}(\mathbb{P}_{p^{n+1},m}\otimes\zeta_m)\\
&=[L_{p^{n+1},m}:H_{p^{n+1+m}}]\cdot{\rm Cor}_{H_{p^{n+1+m}}/H_{p^{n+1}}}(\mathbb{P}_{p^{n+1},m}\otimes\zeta_m),
\end{align*}
and using $(\ref{eq:Gal-action})$, it is immediate to see that
\[
\mathbb{P}_{p^{n+1},m}^{\chi_\k}\in H^0(H_{p^{n+1}},\mathbb{D}_m^\dagger\otimes F_\k)
\]
(cf. \cite[\S{3.4}]{LV-Pisa}). Since $\k$ has wild level $p^m$, the composite map
\[
\mathbb{D}\longrightarrow\mathbb{J}\xrightarrow{\;\eta\;}\I\xrightarrow{\;\k\;}F_\k
\]
factors through $\mathbb{D}\rightarrow\mathbb{D}_m$, inducing the second map
\begin{equation}\label{eq:ind}
H^0(H_{p^{n+1}},\mathbb{D}^\dagger_m\otimes_{}F_\k)\longrightarrow
\mathbb{D}_m\otimes_{}F_\k\longrightarrow F_\k.
\end{equation}
Tracing through the construction of big Heegner points ($\S\ref{subsec:construct}$),
we thus see that the image of $U_p^m\cdot[L_{p^{n+1},m}:H_{p^{n+1+m}}]\cdot\mathcal{P}_{p^{n+1}}$
under the map
\[
H^0(H_{p^{n+1}},\mathbb{D}^\dagger)\longrightarrow\mathbb{D}
\longrightarrow\mathbb{J}\xrightarrow{\kappa\circ\eta}F_\k
\]
agrees with the image of $\mathbb{P}_{p^{n+1},m}^{\chi_\k}$ under the
composite map $(\ref{eq:ind})$, and hence using Corollary~\ref{cor:eta} we conclude that
\begin{align}\label{Pk}
\Phi_\k(\mathbb{P}_{p^{n+1},m}^{\chi_\k})=\k(\mathbf{a}_p^m)\cdot[L_{p^{n+1},m}:H_{p^{n+1+m}}]
\cdot\k(\eta_{H_{p^{n+1}}}(\mathcal{P}_{p^{n+1}})).
\end{align}
Combining $(\ref{pchi})$ and $(\ref{Pk})$, we see that
\[
\k(\eta_{H_{p^{n+1}}}(\mathcal{P}_{p^{n+1}}))=\Phi_\k(\widetilde{P}_{p^{n+1-m},m}).
\]
On the other hand, since $\k$ has weight $2$, by definition of the specialization map
we have
\[
\int_{\Z_p^\times\times \Z_p}\k(x)d{\Phi(P_{p^{n+1}})}(x,y)
=\Phi_\k(\widetilde{P}_{p^{n+1-m},m}).
\]
Comparing the last two equalities, we see that $(\ref{eq:reduction})$ holds,
whence the result.
\end{proof}


\subsection{Higher weight specializations of big Heegner points}

In this section, we relate the higher weight specializations of the ``big'' theta elements 
$\Theta_\infty(\mathbf{f})$ to the theta elements $\theta_\infty(\F_\k)$ of Chida--Hsieh. 
This is the key ingredient for the proof of our main results.

\begin{proposition}\label{prop:higher}
Let $\Theta_\infty^{\rm alg}(\mathbf{f})\in\I[[\Gamma_\infty]]$ be as in Lemma~\ref{lem:alg-wt2},
and let $\k\in\mathcal{X}_{\rm arith}(\I)$ be an arithmetic prime of weight $k\geq 2$ and trivial nebentypus.
Then
\[
\k(\Theta_\infty^{\rm alg}(\mathbf{f}))
=\lambda_k\cdot\delta_K^{-\frac{k-2}{2}}\cdot\theta_\infty^{}(\mathbf{f}_\k),
\]
where $\lambda_\k$ is as in Corollary~\ref{prop:eta} and $\theta_\infty(\mathbf{f}_\k)$ 
is the theta element of Chida--Hsieh (see $\S\ref{subsec:p-adicL}$).
\end{proposition}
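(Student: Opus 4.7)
The plan is to adapt the argument of Proposition~\ref{lem:alg-wt2} from weight~$2$ to arbitrary weight $k \geq 2$, and then to identify the resulting quantity with the Chida--Hsieh theta element of Definition~\ref{def:theta}. The essential new feature in higher weight is that $\Phi_\k = \rho_{\k,*}(\Phi)$ now takes values in the polynomial module $\mathscr{P}_k(F_\k)$ rather than in $F_\k$, so one must extract the correct polynomial evaluation; this is governed by the specialization of the critical character twist $\I^\dagger$ in $\mathbb{D}^\dagger$ at $\k$, which by~(\ref{eq:theta}) picks out (up to $\delta_K^{-(k-2)/2}$) precisely the vector $\rho_k(Z_p^{(n+1)})\mathbf{v}_0^*$ that appears in Definition~\ref{def:theta} for $m=0$.

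First, I would retrace the proof of Proposition~\ref{lem:alg-wt2} for a general arithmetic prime $\k$ of weight $k$, trivial nebentypus, and wild level $p^m$. The construction of the twisted class $\mathbb{P}_{p^{n+1},m}^{\chi_\k}$ in~(\ref{def:pchi}) and the two-sided computation culminating in~(\ref{pchi}) and~(\ref{Pk}) go through verbatim in any weight, using only Theorem~\ref{thm:systemHP}(2) and~(4) together with Corollary~\ref{cor:eta}. The resulting identity has the shape
\[
\k(\eta_{H_{p^{n+1}}}(\mathcal{P}_{p^{n+1}})) = \Phi_\k(\widetilde{P}_{p^{n+1-m},m}),
\]
with the right-hand side now interpreted as the image of the polynomial $\Phi_\k(\widetilde{P}_{p^{n+1-m},m})\in\mathscr{P}_k(F_\k)$ under the functional coming from the composition $\mathbb{D}^\dagger \to \mathbb{J} \to \I \xrightarrow{\k} F_\k$. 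Unwinding the specialization map~(\ref{esp}) together with the critical character relation~(\ref{eq:theta}), this functional is to be identified with the pairing $P \mapsto \delta_K^{-(k-2)/2}\cdot \langle \rho_k(Z_p^{(n+1)})\mathbf{v}_0^*,\, P\rangle_k$.

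Given this identification, substituting $\Phi_\k = \lambda_\k \cdot \phi_{\mathbf{f}_\k}$ from Corollary~\ref{prop:eta}, summing over $\sigma \in \Gamma_n$ after corestriction from $H_{p^{n+1}}$ down to $K_n$, and dividing by $\k(\mathbf{a}_p)^{n+1}$, the left-hand side becomes $\k(\Theta_n^{\rm alg}(\mathbf{f}))$ by Definition~\ref{def:bigtheta}, while the right-hand side matches $\lambda_\k \cdot \delta_K^{-(k-2)/2} \cdot \theta_n^{[0]}(\mathbf{f}_\k)$ from Definition~\ref{def:theta}. In this comparison, the distinction between $\alpha_p(\mathbf{f}_\k) = a_p(\mathbf{f}_\k)\, p^{-(k-2)/2}$ and $\k(\mathbf{a}_p) = a_p(\mathbf{f}_\k)$ is absorbed by the factor $p^{-n(k-2)/2}$ contributed by $\rho_k(Z_p^{(n+1)})$ via $\det(Z_p^{(n+1)})$ (and its analogue in the inert case). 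Passing to the inverse limit over $n$ completes the proof.

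The main obstacle will be the careful bookkeeping of normalization factors---the $D_K^{(k-2)/2}$ inside $\mathbf{v}_0^*$, the $\delta_K$ and $p$ in $\det(Z_p^{(n+1)})$, the $p^{(k-2)/2}$ difference between $\alpha_p$ and $a_p$, and the precise shape of the $\Theta$ twist on $\I^\dagger$---all of which must conspire to yield exactly the clean factor $\lambda_\k \cdot \delta_K^{-(k-2)/2}$ of the statement. Once this bookkeeping is carried out, the argument reduces essentially to the weight-$2$ computation already established in Proposition~\ref{lem:alg-wt2}.
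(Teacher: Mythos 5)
Your plan takes a genuinely different route from the paper's proof, and there is a real gap in it. You propose to rerun the entire weight-$2$ argument of Proposition~\ref{lem:alg-wt2} verbatim for an arithmetic prime $\k$ of weight $k>2$, and you assert that the chain of identities culminating in $(\ref{pchi})$ and $(\ref{Pk})$ carries over unchanged ``using only Theorem~\ref{thm:systemHP}(2) and~(4) together with Corollary~\ref{cor:eta}.'' But Corollary~\ref{cor:eta} is stated, and can only be stated, for arithmetic primes of weight $2$: the commutativity of the diagram there requires $\Phi_\k\colon\mathbb{J}_m\to\cO_\k$ to be scalar-valued, whereas for $k>2$ the specialization $\Phi_\k=\rho_{\k,*}(\Phi)$ takes values in $\mathscr{P}_k(F_\k)$ and the diagram does not even typecheck. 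The auxiliary normalization needed to repair this (a pairing against a distinguished polynomial) is precisely the content you are trying to prove, so invoking it at this stage is circular. Your further claim that the specialization of the critical twist ``picks out (up to $\delta_K^{-(k-2)/2}$) precisely the vector $\rho_k(Z_p^{(n+1)})\mathbf{v}_0^*$'' is asserted without argument, and I do not see how it follows from $(\ref{esp})$ and $(\ref{eq:theta})$; the matrix $Z_p^{(n)}$ enters Chida--Hsieh's Definition~\ref{def:theta} through an independent local choice, not through the critical character.

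The paper sidesteps both obstacles by a density argument that is absent from your plan. The intermediate quantity $\mathcal{L}_n^{\rm an}(\mathbf{f}/K;\k)$ of Definition~\ref{def:Ln} is manifestly continuous in $\k$; the equality $\k(\Theta_n^{\rm alg}(\mathbf{f}))=\mathcal{L}_n^{\rm an}(\mathbf{f}/K;\k)$ established in Proposition~\ref{lem:alg-wt2} for weight-$2$ primes therefore extends to all arithmetic $\k$ by density, with no need to make sense of Corollary~\ref{cor:eta} in higher weight. Next, rather than aiming at $\theta_n^{[0]}$ directly with its full pairing $\langle\rho_k(Z_p^{(n)})\mathbf{v}_0^*,\cdot\rangle_k$, the paper evaluates against characters $\widehat{\chi}$ of infinity type $(m,-m)$ with $m=-(k/2-1)$, for which Remark~\ref{rem:1-k/2} shows that the theta element reduces to extracting the single coefficient $\phi_f^{[k/2-1]}$ of $Y^{k-2}$ (no $Z_p^{(n)}$ conjugation, no pairing), and this is exactly what integrating $\k(x)=x^{k-2}$ against $\Phi$ in $(\ref{esp})$ produces. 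Finally, the comparison with $\theta_\infty(\mathbf{f}_\k)=\varprojlim_n\theta_n^{[0]}(\mathbf{f}_\k)$ is obtained not by re-deriving the $m=0$ element but by invoking the interpolation theorem of Chida--Hsieh (\cite[Thm.~4.6]{ChHs1}) to relate $\widehat{\chi}(\theta_n^{[1-k/2]})$ to $\widehat{\chi}(\theta_\infty)$. If you wish to salvage your direct approach, you would need to first prove a higher-weight extension of Corollary~\ref{cor:eta} incorporating a polynomial evaluation normalization, and then justify the specific form of that normalization; both are substantive additional inputs that the continuity-plus-Remark~\ref{rem:1-k/2} argument avoids.
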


\begin{proof}
It suffices to show that both sides of the purported equality agree when 
evaluated at infinitely many characters of $\Gamma_\infty$. 
Thus let $\widehat{\chi}:\Gamma_\infty\rightarrow\C_p^\times$
be the $p$-adic avatar of a Hecke character $\chi$ of $K$ of infinity type $(m,-m)$ and conductor $p^s$, 
where 
\[
m=-(k/2-1),
\]
and $s\geq 0$ is any non-negative integer. Let $n>s$. The expression defining 
$\mathcal{L}_n^{\rm an}(\mathbf{f}/K;\k)$ (see Definition~\ref{def:Ln}) depends continuously on $\k$, 
and hence from the equality of Proposition~\ref{lem:alg-wt2} we deduce that
\begin{align*}
\k(\Theta_n^{\rm alg}(\mathbf{f}))
&=\k(\mathbf{a}_p^{-n})\sum_{\sigma\in{\rm Gal}(H_{p^{n+1}}/K)}\int_{\Z_p^\times\times\Z_p}\k(x)d\Phi(P_{p^{n+1}}^\sigma)(x,y)\otimes\sigma\\
&=\k(\mathbf{a}_p^{-n})\sum_{\sigma\in{\rm Gal}(H_{p^{n+1}}/K)}\Phi_\k^{[k/2-1]}(P_{p^{n+1}}^\sigma)\otimes\sigma,
\end{align*}
using the fact that integrating $d\Phi(P_{p^{n+1}}^\sigma)(x,y)$ 
against $\k(x)=x^{k-2}$ recovers the coefficient of $Y^{k-2}$ of $\Phi_\k(P_{p^{n+1}}^\sigma)$ 
for the second equality, as apparent from $(\ref{esp})$. (See Remark~\ref{rem:1-k/2}.) 

We thus find
\begin{align*}
\widehat{\chi}(\k(\Theta_\infty^{\rm alg}(\mathbf{f})))
=\widehat{\chi}(\k(\Theta_n^{\rm alg}(\mathbf{f})))
&=\k(\mathbf{a}_p^{-n})\sum_{\sigma\in\Gamma_n}\Phi_\k^{[k/2-1]}(P_{p^{n+1}}^\sigma)\widehat{\chi}(\sigma)\\
&=\lambda_\k\cdot\k(\mathbf{a}_p^{-n})\sum_{\sigma\in\Gamma_n}\phi_{\F_\k}^{[k/2-1]}(P_{p^{n+1}}^\sigma)\widehat{\chi}(\sigma)\\
&\equiv\lambda_\k\cdot\delta_K^{-\frac{k-2}{2}}\cdot\widehat{\chi}(\theta_n^{[k/2-1]}(\mathbf{f}_\k))\pmod{p^n}\\
&\equiv\lambda_k\cdot\delta_K^{-\frac{k-2}{2}}\cdot\widehat{\chi}(\theta_\infty(\mathbf{f}_\k))\pmod{p^n},
\end{align*}
using Remark~\ref{rem:1-k/2} and \cite[Thm.~4.6]{ChHs1} 
for the penultimate and last equalities, respectively. This congruence holds for all $n>s$, and hence 
\[
\widehat{\chi}(\k(\Theta_\infty^{\rm alg}(\mathbf{f})))=
\lambda_k\cdot\delta_K^{-\frac{k-2}{2}}\cdot\widehat{\chi}(\theta_\infty(\mathbf{f}_\k)).
\]
Letting $\chi$ vary, the result follows.
\end{proof}

As a consequence of the above result, we deduce that the two-variable $p$-adic $L$-function 
$L_p^{\rm alg}(\mathbf{f}/K)$ of Definition~\ref{def:bigtheta} (constructed from big Heegner points) 
interpolates the $p$-adic $L$-functions $L_p^{\rm an}(\mathbf{f}_\k/K)$ of Chida--Hsieh (Definition~\ref{def:ChHs}) 
attached to the different specializations of the Hida family $\mathbf{f}$.

\begin{theorem}\label{thm:higher}
Let $\k\in\mathcal{X}_{\rm arith}(\I)$ be an arithmetic prime of weight $k\geq 2$ and trivial nebentypus.
Then
\[
\k(L_p^{\rm alg}(\mathbf{f}/K))=\lambda_\k^2\cdot\delta_K^{-(k-2)}\cdot L_p^{\rm an}(\F_\k/K),
\]
where $\lambda_\k$ is as in Corollary~\ref{prop:eta}.
\end{theorem}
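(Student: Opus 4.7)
The plan is to deduce Theorem~\ref{thm:higher} directly from Proposition~\ref{prop:higher} by applying the arithmetic specialization $\k$ to the defining factorization
\[
L_p^{\rm alg}(\mathbf{f}/K)=\Theta_\infty^{\rm alg}(\mathbf{f})\cdot\Theta_\infty^{\rm alg}(\mathbf{f})^*
\]
of Definition~\ref{def:k-theta}, and then invoking the parallel factorization
$L_p^{\rm an}(\mathbf{f}_\k/K)=\theta_\infty(\mathbf{f}_\k)\cdot\theta_\infty(\mathbf{f}_\k)^*$
from Definition~\ref{def:ChHs}. The theorem is essentially a formal corollary; the substantive input is already Proposition~\ref{prop:higher}.

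First, I would check the compatibility of the specialization with the involution. The continuous $\cO_L$-algebra map $\k\colon\I\to\cO_\k$ extends by $\Gamma_\infty$-linearity to a continuous homomorphism $\I\pwseries{\Gamma_\infty}\to\cO_\k\pwseries{\Gamma_\infty}$, which I also denote by $\k$. Since the involution $*$ acts trivially on coefficients and sends $\gamma\mapsto\gamma^{-1}$ for $\gamma\in\Gamma_\infty$, the maps $\k$ and $*$ commute, so
\[
\k(L_p^{\rm alg}(\mathbf{f}/K))=\k(\Theta_\infty^{\rm alg}(\mathbf{f}))\cdot\k(\Theta_\infty^{\rm alg}(\mathbf{f}))^*.
\]

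Next, I would apply Proposition~\ref{prop:higher}, which gives
\[
\k(\Theta_\infty^{\rm alg}(\mathbf{f}))=\lambda_\k\cdot\delta_K^{-(k-2)/2}\cdot\theta_\infty(\mathbf{f}_\k).
\]
Because $\lambda_\k$ and $\delta_K^{-(k-2)/2}$ are scalars (fixed by $*$), applying $*$ to both sides yields
\[
\k(\Theta_\infty^{\rm alg}(\mathbf{f}))^*=\lambda_\k\cdot\delta_K^{-(k-2)/2}\cdot\theta_\infty(\mathbf{f}_\k)^*.
\]
Multiplying these two expressions and recognizing the result through Definition~\ref{def:ChHs} gives
\[
\k(L_p^{\rm alg}(\mathbf{f}/K))=\lambda_\k^2\cdot\delta_K^{-(k-2)}\cdot\theta_\infty(\mathbf{f}_\k)\cdot\theta_\infty(\mathbf{f}_\k)^*=\lambda_\k^2\cdot\delta_K^{-(k-2)}\cdot L_p^{\rm an}(\mathbf{f}_\k/K),
\]
as desired.

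Since the entire argument reduces to (i) the compatibility of $\k$ with $*$, which is a formal check on group-like elements, and (ii) two applications of Proposition~\ref{prop:higher}, there is no real obstacle here. The only genuinely nontrivial ingredient has already been established in Proposition~\ref{prop:higher}, where the connection between the big Heegner-point theta element and the Chida--Hsieh theta element at higher weights required the interpolation through $\Phi$ and the identification via Corollary~\ref{cor:eta}.
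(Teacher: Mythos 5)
Your proof is correct and is exactly what the paper means by its one-line proof ``After Proposition~\ref{prop:higher}, this follows immediately from the definitions'': you unpack the factorizations $L_p^{\rm alg}=\Theta_\infty^{\rm alg}\cdot(\Theta_\infty^{\rm alg})^*$ and $L_p^{\rm an}=\theta_\infty\cdot\theta_\infty^*$, note that $\k$ commutes with the involution $*$, and apply Proposition~\ref{prop:higher} twice. Nothing is missing; the argument matches the paper's intended route.
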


\begin{proof}
After Proposition~\ref{prop:higher}, this follows immediately from the definitions.
\end{proof}

\begin{remark}
If we do not insist in the particular choice of isomorphism $\eta$ from Corollary~\ref{cor:eta}, 
then the equality in Theorem~\ref{thm:higher} holds up to a unit in $\cO_\k^\times$ (cf. Remark~\ref{rem:eta}).
\end{remark}

\section{Main results}\label{sec:main}

In this section, we relate the higher weight specializations of the theta elements constructed from big Heegner points 
to the special values of certain Rankin--Selberg $L$-functions, as conjectured in \cite{LV-MM}. 
Following the discussion [\emph{loc.cit.}, \S{9.3}], we begin by recalling the statement of this conjecture.


Let $\k\in\mathcal{X}_{\rm arith}(\I)$ be an arithmetic prime of even weight $k\geq 2$,
and let $\mathbf{f}_\k$ be the associated ordinary $p$-stabilized newform. In view of $(\ref{eq:theta})$,
for all $z\in\Z_p^\times$ we have
\[
\theta_\k(z)=z^{k/2-1}\vartheta_\k(z),
\]
where $\vartheta_\k:\Z_p^\times\rightarrow F_\k^\times$ is such that
$\vartheta_\k^2$ is the nebentypus of $\mathbf{f}_\k$; in particular, the twist
\[ 
\mathbf{f}_\k^\dagger:=\mathbf{f}_\k\otimes\vartheta_\k^{-1}
\] 
has trivial nebentypus.

Let $L_p^{\rm alg}(\mathbf{f}/K)\in\I\pwseries{\Gamma_\infty}$ be the two-variable $p$-adic $L$-function 
of Definition~\ref{def:k-theta}, constructed from big Heegner points. 
By linearity, any continuous character $\chi:\Gamma_\infty\rightarrow\C_p^\times$ defines an
algebra homomorphism $\chi:\cO_\k\pwseries{\Gamma_\infty}\rightarrow\C_p$, and we set
\[
L_p^{\rm alg}(\mathbf{f}/K;\k,\chi):=\chi\circ\k(L^{\rm alg}_p(\mathbf{f}/K)).
\]

Recall that an arithmetic prime $\k\in\mathcal{X}_{\rm arith}(\I)$ is said to be
\emph{exceptional} if it has weight $2$, trivial wild character, and $\k(\mathbf{a}_p)^2=1$.
Denote by $w_{\mathbf{f}}\in\{\pm{1}\}$ the \emph{generic root number} of the Hida family $\mathbf{f}$,
so that for every non-exceptional $\k\in\mathcal{X}_{\rm arith}(\I)$
the $L$-function of $\mathbf{f}_\k^\dagger$ over $\Q$ has sign $w_\mathbf{f}$ in its functional equation. 

\begin{conjecture}[{\cite[Conj.~9.14]{LV-MM}}]\label{conj:non-0}
Let $\k\in\mathcal{X}_{\rm arith}(\I)$ be a non-exceptional arithmetic prime of even weight $k\geq 2$,
and let $\chi:\Gamma_\infty\rightarrow\C_p^\times$ be a finite order character.
If $w_\mathbf{f}=1$, then
\[
L_p^{\rm alg}(\mathbf{f}/K;\k,\chi)\neq 0\quad\Longleftrightarrow\quad
L_K(\mathbf{f}_\k^\dagger,\chi,k/2)\neq 0.
\]
\end{conjecture}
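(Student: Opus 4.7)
The plan is to deduce Conjecture~\ref{conj:non-0} as an essentially formal consequence of the interpolation formula established in Theorem~\ref{thm:higher}, combined with Chida--Hsieh's interpolation formula recalled in Theorem~\ref{thm:p-adicL}. Concretely, given a non-exceptional $\k\in\mathcal{X}_{\rm arith}(\I)$ of even weight $k\geq 2$ and a finite order $\chi$ (viewed as the $p$-adic avatar of a Hecke character of $K$ of infinity type $(0,0)$), I would first apply (a mild extension of) Theorem~\ref{thm:higher} to obtain
\[
\k(L_p^{\rm alg}(\mathbf{f}/K)) = \lambda_\k^2\cdot\delta_K^{-(k-2)}\cdot L_p^{\rm an}(\mathbf{f}_\k^\dagger/K),
\]
where the critical twist $\I^\dagger$ built into $\Theta_\infty^{\rm alg}(\mathbf{f})$ precisely implements the twist by $\vartheta_\k^{-1}$ taking $\mathbf{f}_\k$ to the trivial-nebentypus form $\mathbf{f}_\k^\dagger$. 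Evaluating at $\chi$ and invoking Theorem~\ref{thm:p-adicL} then yields
\[
L_p^{\rm alg}(\mathbf{f}/K;\k,\chi) = \lambda_\k^2\cdot\delta_K^{-(k-2)}\cdot\epsilon(\mathbf{f}_\k^\dagger)\cdot C_p(\mathbf{f}_\k^\dagger,\chi)\cdot E_p(\mathbf{f}_\k^\dagger,\chi)\cdot\frac{L_K(\mathbf{f}_\k^\dagger,\chi,k/2)}{\Omega_{\mathbf{f}_\k^\dagger,N^-}}.
\]

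The remaining task is to verify that every factor on the right-hand side other than $L_K(\mathbf{f}_\k^\dagger,\chi,k/2)$ is nonzero. The discriminant $\delta_K$ and Gross's period $\Omega_{\mathbf{f}_\k^\dagger,N^-}$ are nonzero by construction, and $\lambda_\k\in\cO_\k\smallsetminus\{0\}$ by Corollary~\ref{prop:eta}. The hypothesis $w_\mathbf{f}=+1$ gives $\epsilon(\mathbf{f}_\k^\dagger)=+1$. The explicit expression for $C_p(\mathbf{f}_\k^\dagger,\chi)$ in Theorem~\ref{thm:interp} (with $m=0$) is a product of $\Gamma$-values at positive integers, the unit $a_p(\mathbf{f}_\k^\dagger)$, and powers of $p$, $D_K$, $\vert\cO_K^\times\vert$, all nonzero. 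The multiplier $E_p(\mathbf{f}_\k^\dagger,\chi)$ equals $1$ whenever $\chi$ has nontrivial $p$-power conductor; when $\chi$ is trivial, it involves factors of the form $(1-\alpha_p^{-1}\chi(\mathfrak{p}))^2(1-\alpha_p^{-1}\chi(\overline{\mathfrak{p}}))^2$ or $(1-\alpha_p^{-2})^2$, and the non-exceptionality of $\k$ together with the Weil bound $\vert a_p(\mathbf{f}_\k^\dagger)\vert_\infty\leq 2p^{(k-1)/2}$ and $p$-adic ordinariness $\vert a_p(\mathbf{f}_\k^\dagger)\vert_p=1$ forces $\alpha_p(\mathbf{f}_\k^\dagger)^2\neq 1$, so $E_p(\mathbf{f}_\k^\dagger,\chi)\neq 0$. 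Combining these gives the desired equivalence.

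Conceptually, all the hard work has already been absorbed into Theorem~\ref{thm:higher}, whose proof via Proposition~\ref{prop:higher} is the genuinely nontrivial input. I therefore expect the only real obstacle to be not in this last deduction, which amounts to bookkeeping, but in verifying that the weight-$k$ specialization of $\Theta_\infty^{\rm alg}(\mathbf{f})$ — a $p$-adic limit of weight-$2$ quaternionic divisor classes — agrees up to the explicit error $\lambda_\k\cdot\delta_K^{-(k-2)/2}$ with the Chida--Hsieh theta element $\theta_\infty(\mathbf{f}_\k^\dagger)$. That identification hinges on the two-variable family $\Phi\in S(U_0,\mathscr{D}_\I)$ of Corollary~\ref{prop:eta} and on extracting the correct coefficient of $Y^{k-2}$ from the integral defining $\rho_{\k,*}(\Phi)$, and additionally requires care about the weight-$2$ and higher-weight normalizations matching through the critical twist; these are precisely the delicate points that the preceding sections are designed to handle.
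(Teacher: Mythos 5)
Your overall strategy matches the paper's: the paper deduces the nonvanishing equivalence from an explicit interpolation formula obtained by combining Theorem~\ref{thm:higher} with the Chida--Hsieh formula of Theorem~\ref{thm:p-adicL}, and then observes that every auxiliary factor is nonzero (with $E_p(\mathbf{f}_\k,\chi)\neq 0$ being the only point requiring non-exceptionality). Your $E_p$ and $C_p$ nonvanishing arguments are correct, though the appeal to the archimedean Weil bound is superfluous: for $k>2$ the equality $\alpha_p=a_p\,p^{-(k-2)/2}$ together with ordinarity $|a_p|_p=1$ already gives $|\alpha_p|_p>1$, and for $k=2$ the exclusion of $\alpha_p^2=1$ is precisely the non-exceptionality hypothesis.

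The genuine gap is your invocation of ``a mild extension of Theorem~\ref{thm:higher}'' to reach $L_p^{\rm an}(\mathbf{f}_\k^\dagger/K)$ for arithmetic primes of arbitrary (even-weight) nebentypus. The paper's Theorem~\ref{thm:higher} is stated and proved only for $\k$ of trivial nebentypus, and correspondingly Theorem~\ref{thm:L-values} — where the deduction of Conjecture~\ref{conj:non-0} actually occurs — is also restricted to trivial nebentypus. The authors are explicit in the Introduction that they establish \cite[Conj.~9.14]{LV-MM} only ``in the cases of higher weight and trivial nebentypus.'' The extension you gesture at is not free: the proof of Proposition~\ref{prop:higher} relies on the specific computation (Remark~\ref{rem:1-k/2}) extracting the $Y^{k-2}$-coefficient of $\Phi_\k$ via the specialization map $(\ref{esp})$, and for nontrivial $\psi\omega^{k_0-k}$ the integrand involves $\varepsilon_\k(x)$, which changes both the identity $(\ref{eq:1-k/2})$ and the matching with the Chida--Hsieh normalization. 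The role of the critical-character twist $\I^\dagger$ does not automatically implement the twist $\mathbf{f}_\k\mapsto\mathbf{f}_\k^\dagger$ at the level of the theta elements in the way you assert; that identification would need to be verified along the lines of $\S\ref{subsec:wt2}$ and is precisely what the paper declines to do. So your proposal, if read as proving the full conjecture, overreaches; restricted to the trivial-nebentypus case it reproduces the paper's argument.
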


In view of Gross' special value formula \cite{Gross-Special-Values}, it is natural to expect 
Conjecture~\ref{conj:non-0} to be a consequence of a finer statement
whereby $\kappa(L^{\rm alg}_p(\mathbf{f}/K))$ 
would give rise to a $p$-adic $L$-function interpolating the central critical values
$L_K(\mathbf{f}_\k^\dagger,\chi,k/2)$ as $\chi$ varies. For 
$\k\in\mathcal{X}_{\rm arith}(\I)$ of weight $2$, 
this indeed follows from the discussion in the previous section combined
with Howard's ``twisted'' Gross--Zagier formula \cite{Ho}. (See \cite[\S{5}]{LV-Pisa}.) 
The corresponding statement in higher weights is the main result of this paper,
which shows that the interpolation property in fact holds  
for a more general family of algebraic characters of $\Gamma_\infty$.

\begin{theorem}\label{thm:L-values}
Let $\k\in\mathcal{X}_{\rm arith}(\I)$ be an arithmetic prime of weight $k\geq 2$ and trivial nebentypus,
and let $\chi:\Gamma_\infty\rightarrow\C_p^\times$ be the $p$-adic avatar of a Hecke character of $K$ of 
infinity type $(m,-m)$ with $-k/2<m<k/2$ and conductor $p^n$.
Then
\[
L_p^{\rm alg}(\mathbf{f}/K)(\k,\chi)=\lambda_k^2\cdot\delta_K^{-(k-2)}
\cdot \epsilon(\mathbf{f}_\k)
\cdot C_p(\mathbf{f}_\k,\chi)\cdot E_p(\mathbf{f}_\k,\chi)
\cdot \frac{L_K(\mathbf{f}_\k,\chi,k/2)}{\Omega_{\mathbf{f}_\k,N^-}},
\]
where $\lambda_\k$ is as in Corollary~\ref{prop:eta}, $\epsilon(\mathbf{f}_\k)$ is the root number of $\mathbf{f}_\k$,
and $C_p(\mathbf{f}_\k,\chi)$, $E_p(\mathbf{f}_\k,\chi)$, and $\Omega_{\mathbf{f}_\k,N^-}$
are as in Theorem~\ref{thm:interp}. In particular, if $\k$ is non-exceptional, 
Conjecture~\ref{conj:non-0} holds.
\end{theorem}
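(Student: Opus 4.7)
The plan is simply to chain the two interpolation identities already established in the paper. By the definition of $L_p^{\rm alg}(\mathbf{f}/K;\k,\chi)$,
\[
L_p^{\rm alg}(\mathbf{f}/K)(\k,\chi) = \chi\bigl(\k(L_p^{\rm alg}(\mathbf{f}/K))\bigr),
\]
where the outer $\chi$ is regarded as an algebra homomorphism $\cO_\k\pwseries{\Gamma_\infty}\to\C_p$. Theorem~\ref{thm:higher} expresses the inner $\k$-specialization as $\lambda_\k^2\cdot\delta_K^{-(k-2)}\cdot L_p^{\rm an}(\mathbf{f}_\k/K)$, reducing the claim to evaluating the Chida--Hsieh anticyclotomic $p$-adic $L$-function of $\mathbf{f}_\k$ against $\chi$. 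Identifying $\chi$ with the $p$-adic avatar $\widehat\chi$ of the Hecke character of infinity type $(m,-m)$ with $-k/2<m<k/2$ and conductor $p^n$, this evaluation is furnished by Theorem~\ref{thm:p-adicL}, and produces exactly the product involving $\epsilon(\mathbf{f}_\k)$, $C_p(\mathbf{f}_\k,\chi)$, $E_p(\mathbf{f}_\k,\chi)$, $L_K(\mathbf{f}_\k,\chi,k/2)$ and $\Omega_{\mathbf{f}_\k,N^-}$ appearing in the statement. Multiplying by the constant $\lambda_\k^2\delta_K^{-(k-2)}$ gives the displayed formula.

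For the deduction of Conjecture~\ref{conj:non-0}, I would specialize the just-proved formula to the case $m=0$ (so that $\chi$ is a finite order character) and check that every explicit factor preceding $L_K(\mathbf{f}_\k,\chi,k/2)$ is nonzero. Since $\mathbf{f}_\k$ has trivial nebentypus, the twist $\mathbf{f}_\k^\dagger$ equals $\mathbf{f}_\k$, so the $L$-value appearing in the conjecture agrees with the one in our formula. By Corollary~\ref{prop:eta} we have $\lambda_\k\in\cO_\k\smallsetminus\{0\}$; the explicit formula for $C_p(\mathbf{f}_\k,\chi)$ in Theorem~\ref{thm:interp} is a product of $\Gamma$-values and of $(p/a_p(\mathbf{f}_\k)^2)^n$, all nonzero (using that $a_p(\mathbf{f}_\k)$ is a $p$-adic unit by ordinarity); Gross's period $\Omega_{\mathbf{f}_\k,N^-}$ is nonzero by definition; and $\epsilon(\mathbf{f}_\k)=\pm 1$. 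Finally, the Euler-type factor $E_p(\mathbf{f}_\k,\chi)$ is either $1$ (when the conductor of $\chi$ is positive) or an explicit polynomial in $\alpha_p=a_p(\mathbf{f}_\k)p^{-(k-2)/2}$ which is nonzero precisely under the non-exceptionality assumption on $\k$. Hence $L_p^{\rm alg}(\mathbf{f}/K;\k,\chi)\neq 0$ if and only if $L_K(\mathbf{f}_\k^\dagger,\chi,k/2)\neq 0$, establishing Conjecture~\ref{conj:non-0} in this setting.

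Thus the present deduction is essentially formal: the substantive content has been discharged in Theorem~\ref{thm:higher}, whose proof combined the construction of the family of automorphic forms $\Phi\in S(U_0,\mathscr{D}_\I)$ with the horizontal and vertical trace relations of Theorem~\ref{thm:systemHP} and the explicit shape of Chida--Hsieh's theta elements recorded in Remark~\ref{rem:1-k/2}. The only small point to verify carefully in the chain is that the character $\chi$ used to evaluate $\k(L_p^{\rm alg}(\mathbf{f}/K))$ matches the $p$-adic avatar $\widehat\chi$ appearing in the interpolation statement of Theorem~\ref{thm:p-adicL}, which is immediate from the normalizations of the Artin reciprocity map and the geometric identification $\mathcal{G}_n\simeq\Gal(H_{p^n}/K)$ already fixed in Section~\ref{sec:L-values}.
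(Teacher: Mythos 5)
Your argument is correct and matches the paper's proof exactly: the paper's entire proof reads "This follows immediately from Theorem~\ref{thm:p-adicL} and Theorem~\ref{thm:higher}, noting that $E_p(\mathbf{f}_\k,\chi)\neq 0$ if $\k$ is non-exceptional," which is precisely the chain of specializations you spell out. Your more explicit check that the remaining constants $\lambda_\k^2$, $\delta_K^{-(k-2)}$, $\epsilon(\mathbf{f}_\k)$, $C_p(\mathbf{f}_\k,\chi)$, $\Omega_{\mathbf{f}_\k,N^-}$ are nonzero is a helpful elaboration of the same deduction, not a different approach.
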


\begin{proof}
This follows immediately from Theorem~\ref{thm:p-adicL} and Theorem~\ref{thm:higher}, 
noting that $E_p(\mathbf{f}_\k,\chi)\neq 0$ if $\k$ is non-exceptional.
\end{proof}

We conclude this paper with the following application to another conjecture from \cite{LV-MM}. 

\begin{conjecture}[{\cite[Conj.~9.5]{LV-MM}}]\label{conj:9.5}
Assume $w_\mathbf{f}=1$. Then $\k(L_p^{\rm alg}(\mathbf{f}/K))\neq 0$.
\end{conjecture}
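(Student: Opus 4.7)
The plan is to deduce Conjecture~\ref{conj:9.5}, for non-exceptional arithmetic primes of trivial nebentypus, as an essentially immediate corollary of Theorem~\ref{thm:higher} combined with known non-vanishing results for Chida--Hsieh's anticyclotomic $p$-adic $L$-function.

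Concretely, I would fix a non-exceptional $\kappa\in\mathcal{X}_{\rm arith}(\I)$ of weight $k\geq 2$ and trivial nebentypus, and begin by applying Theorem~\ref{thm:higher} to write
\[
\kappa(L_p^{\rm alg}(\mathbf{f}/K))=\lambda_\kappa^2\cdot\delta_K^{-(k-2)}\cdot L_p^{\rm an}(\mathbf{f}_\kappa/K).
\]
Since $\lambda_\kappa\in\cO_\kappa\setminus\{0\}$ by Corollary~\ref{prop:eta} and $\delta_K\in F_\kappa^\times$, the problem reduces to showing that $L_p^{\rm an}(\mathbf{f}_\kappa/K)$ is a nonzero element of $\tfrac{1}{(k-2)!}\cO_\kappa\pwseries{\Gamma_\infty}$.

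To finish, I would invoke non-vanishing of the Chida--Hsieh anticyclotomic $p$-adic $L$-function. The assumption $w_\mathbf{f}=1$, together with the Heegner hypothesis that $N^-$ is a square-free product of an odd number of primes inert in $K$, ensures that the functional equation of $L_K(\mathbf{f}_\kappa,s)$ has sign $+1$ at every non-exceptional $\kappa$. Under this sign condition, combining the interpolation formula of Theorem~\ref{thm:p-adicL} with the non-vanishing of $L_K(\mathbf{f}_\kappa,\chi,k/2)$ for a positive proportion of finite order anticyclotomic characters $\chi$---established in weight $2$ by Cornut--Vatsal and in higher weight by Chida--Hsieh in \cite{ChHs1}---produces at least one $\chi$ for which $\widehat{\chi}(L_p^{\rm an}(\mathbf{f}_\kappa/K))\neq 0$, and hence $L_p^{\rm an}(\mathbf{f}_\kappa/K)\neq 0$.

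The main obstacle to proving Conjecture~\ref{conj:9.5} in full generality is the trivial nebentypus hypothesis, which is hard-wired into Theorem~\ref{thm:higher} through the parallel restriction in \cite{ChHs1}. Extending the strategy to arithmetic primes of arbitrary wild character $\psi$ would require upgrading both the comparison of theta elements in Proposition~\ref{prop:higher} and Chida--Hsieh's construction itself to allow non-trivial nebentypus; I expect the former to go through essentially verbatim by threading the appropriate twist through the specialization map~$(\ref{esp})$, while the latter appears to be the genuine gap. A secondary, cleaner consequence of the same argument is the bare statement $L_p^{\rm alg}(\mathbf{f}/K)\neq 0$ in $\I\pwseries{\Gamma_\infty}$, obtained by specializing at a single weight-$2$ non-exceptional $\kappa$ of trivial character and appealing to Proposition~\ref{lem:alg-wt2} together with Vatsal's theorem.
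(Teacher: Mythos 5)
Note first that the statement you were given is labeled a Conjecture in the paper, and the paper does \emph{not} prove it. The paper's actual contribution here is the Corollary that immediately follows Conjecture~\ref{conj:9.5}, which shows that (restricted to non-exceptional $\kappa$ of trivial nebentypus) the conjecture is \emph{equivalent} to the non-vanishing of $L_K(\mathbf{f}_\kappa,\mathds{1},k/2)$ for at least one such $\kappa$; consistently, the Introduction describes this only as ``significant progress'' on Conjecture~\ref{conj:9.5}, not a proof. Your reduction to $L_p^{\rm an}(\mathbf{f}_\kappa/K)\neq 0$ via Theorem~\ref{thm:higher} (and the invertibility of $\lambda_\kappa$ and $\delta_K$) is precisely the mechanism of that Corollary, so up to that point your reasoning tracks the paper exactly.

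Where you go beyond the paper, and where the gap sits, is in asserting that the non-vanishing of $L_p^{\rm an}(\mathbf{f}_\kappa/K)$ for $k>2$ is already available from \cite{ChHs1}. In weight $2$ the input is Vatsal/Cornut--Vatsal and is available under the running hypotheses, so that case is sound. For $k>2$, however, what \cite{ChHs1} supplies --- and what the paper quotes as Theorems~\ref{thm:interp} and \ref{thm:p-adicL} --- is the \emph{interpolation formula}; the non-vanishing of $L_p^{\rm an}(\mathbf{f}_\kappa/K)$ in higher weight is a separate theorem of Chida--Hsieh (vanishing of the anticyclotomic $\mu$-invariant), proved under hypotheses strictly stronger than Assumption~\ref{mult} together with the absolute irreducibility and $p$-distinguished conditions this paper imposes: notably a CR-type big-image condition on $\bar\rho_f$ and ramification constraints on $\bar\rho_f$ at primes dividing $N^-$. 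Under the hypotheses actually in force here the higher-weight non-vanishing is not known, which is exactly why the authors stop at the equivalence rather than claiming the conjecture. If you import those extra hypotheses, your sketch does produce an unconditional proof in that setting, and that would be a worthwhile addition, but the extra hypotheses must be stated and verified rather than tacitly assumed. Your closing remark --- that a single weight-$2$ non-exceptional $\kappa$ of trivial character together with Proposition~\ref{lem:alg-wt2} and Vatsal's theorem already gives $L_p^{\rm alg}(\mathbf{f}/K)\neq 0$ in $\I\pwseries{\Gamma_\infty}$ --- is correct and is a cleaner unconditional statement, but it is weaker than Conjecture~\ref{conj:9.5}, which concerns the individual arithmetic specializations.
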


Denote by $\mathcal{X}_k^o(\I)$ the set of non-exceptional arithmetic primes 
$\k\in\mathcal{X}_{\rm arith}(\I)$ of weight $k\geq 2$ and trivial nebentypus.

\begin{corollary}
The following are equivalent:
\begin{enumerate}
\item{} For some $k\geq 2$ and $\k\in\mathcal{X}_k^o(\I)$, $L_K(\mathbf{f}_\k,\mathds{1},k/2)\neq 0$.
\item{} Conjecture~\ref{conj:9.5} holds.
\item{} $L_K(\mathbf{f}_\k,\mathds{1},k/2)\neq 0$, for all but finitely pairs $\k\in\mathcal{X}_k^o(\I)$, $k\geq 2$.
\end{enumerate}
\end{corollary}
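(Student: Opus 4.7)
The plan is to apply Theorem~\ref{thm:L-values} at the trivial anticyclotomic character $\chi=\mathds{1}$, which is the $p$-adic avatar of the trivial Hecke character of $K$ (of infinity type $(0,0)$ and conductor $1$), and then to exploit the fact that a nonzero element of $\I$ has only finitely many arithmetic zeros. The case $w_{\mathbf{f}}=-1$ is vacuous: every central value $L_K(\mathbf{f}_\kappa,\mathds{1},k/2)$ then vanishes for non-exceptional $\kappa$ by the sign of the functional equation, so all three assertions are (trivially) false together; we may therefore assume $w_{\mathbf{f}}=1$.

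The first step is to observe that, for any non-exceptional $\kappa\in\mathcal{X}_k^o(\I)$, Theorem~\ref{thm:L-values} specialized at $\chi=\mathds{1}$ reads
\[
L_p^{\rm alg}(\mathbf{f}/K)(\kappa,\mathds{1}) \;=\; \lambda_\kappa^2\cdot\delta_K^{-(k-2)}\cdot\epsilon(\mathbf{f}_\kappa)\cdot C_p(\mathbf{f}_\kappa,\mathds{1})\cdot E_p(\mathbf{f}_\kappa,\mathds{1})\cdot\frac{L_K(\mathbf{f}_\kappa,\mathds{1},k/2)}{\Omega_{\mathbf{f}_\kappa,N^-}},
\]
in which the prefactors $\lambda_\kappa$, $\delta_K^{-(k-2)}$, $\epsilon(\mathbf{f}_\kappa)$ and $C_p(\mathbf{f}_\kappa,\mathds{1})$ are manifestly nonzero, while the $p$-adic modifier $E_p(\mathbf{f}_\kappa,\mathds{1})$ is also nonzero \emph{precisely} because $\kappa$ is non-exceptional (this is the whole motivation for excluding exceptional primes: otherwise $\alpha_p=\pm 1$ and the Euler-type factor collapses). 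Consequently, at each non-exceptional $\kappa$,
\[
L_p^{\rm alg}(\mathbf{f}/K)(\kappa,\mathds{1})\neq 0 \;\Longleftrightarrow\; L_K(\mathbf{f}_\kappa,\mathds{1},k/2)\neq 0.
\]

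The second step is a generic-nonvanishing argument. Set $\xi:=L_p^{\rm alg}(\mathbf{f}/K)(\mathds{1})\in\I$, the specialization of the two-variable $p$-adic $L$-function at the trivial character. By the preceding equivalence, assertion (1) is equivalent to $\xi$ admitting one nonzero arithmetic specialization, hence to $\xi\neq 0$ in $\I$. Since $\I$ is a finite flat extension of $\cO_L\pwseries{T}$, and since the height-one arithmetic primes of $\cO_L\pwseries{T}$ form a Zariski-dense set, Weierstrass preparation implies that a nonzero element of $\I$ vanishes at only finitely many arithmetic primes. Applying this to $\xi$ and translating back through the displayed interpolation formula yields (3). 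Conversely (3) $\Rightarrow$ (1) is trivial; this already gives (1)$\Leftrightarrow$(3).

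Finally, the condition $\xi\neq 0$ in $\I$ is precisely the content of Conjecture~\ref{conj:9.5}, which under the hypothesis $w_{\mathbf{f}}=1$ predicts the nonvanishing of $\kappa(L_p^{\rm alg}(\mathbf{f}/K))$ at the trivial anticyclotomic character; so (2) sits cleanly between (1) and (3). The whole argument is therefore routine once one accepts Theorem~\ref{thm:L-values}: the only genuinely delicate points are the nonvanishing of $E_p(\mathbf{f}_\kappa,\mathds{1})$ for non-exceptional $\kappa$ and the Weierstrass-preparation step in $\I$, both of which are standard.
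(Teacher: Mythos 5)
Your proof is correct and takes essentially the same route the paper does, simply spelling out what the paper leaves implicit: specialize Theorem~\ref{thm:L-values} at $\chi=\mathds{1}$, observe that all the interpolation factors other than the $L$-value are nonzero for non-exceptional $\kappa$, identify assertion~(2) with the nonvanishing in $\I$ of $\xi:=L_p^{\rm alg}(\mathbf{f}/K)(\mathds{1})$, and run a Weierstrass-preparation argument to get (1)$\Leftrightarrow$(3), with (2) sitting between them. Two small remarks: first, the claim that $E_p(\mathbf{f}_\kappa,\mathds{1})\neq 0$ ``precisely because $\kappa$ is non-exceptional'' is only an implication, not an equivalence (for instance, in the split case $E_p=(1-\alpha_p^{-1})^4$ vanishes only when $\alpha_p=1$, not when $\alpha_p=-1$), but the direction you need is correct; second, reading Conjecture~\ref{conj:9.5} as the statement ``$\xi\neq 0$ in $\I$'' is the interpretation that makes the equivalence work (and it matches the intended meaning, i.e. the definite analogue of Howard's horizontal nonvanishing $\mathfrak{Z}_0\neq 0$), even though the way it is transcribed in the paper as ``$\kappa(L_p^{\rm alg}(\mathbf{f}/K))\neq 0$'' could be misread as a specialization in the weight direction rather than at the trivial anticyclotomic character.
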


\begin{proof}
The implications $(1)\Rightarrow(2)$ and $(2)\Rightarrow(3)$ are immediate consequences of Theorem~\ref{thm:L-values},
and the implication $(3)\Rightarrow(1)$ is obvious.
\end{proof}

\bibliographystyle{amsalpha}
\bibliography{paper}

\providecommand{\bysame}{\leavevmode\hbox to3em{\hrulefill}\thinspace}
\providecommand{\MR}{\relax\ifhmode\unskip\space\fi MR }
\providecommand{\MRhref}[2]{%
  \href{http://www.ams.org/mathscinet-getitem?mr=#1}{#2}
}
\providecommand{\href}[2]{#2}
\begin{thebibliography}{EPW06}

\bibitem[BD96]{BDmumford-tate}
M.~Bertolini and H.~Darmon, \emph{Heegner points on {M}umford-{T}ate curves},
  Invent. Math. \textbf{126} (1996), no.~3, 413--456. \MR{MR1419003
  (97k:11100)}

\bibitem[Cas13]{Cas}
Francesc Castella, \emph{Heegner cycles and higher weight specializations of
  big {H}eegner points}, Math. Ann. \textbf{356} (2013), no.~4, 1247--1282.
  \MR{3072800}

\bibitem[CH13]{ChHs1}
M.~Chida and M.-L. Hsieh, \emph{Special values of anticyclotomic
  {$L$}-functions for modular forms}, Preprint (2013).

\bibitem[CKL]{CKL}
Francesc Castella, Chan-Ho Kim, and Matteo Longo, \emph{Variation of
  anticyclotomic {I}wasawa invariants in {H}ida families}, in preparation.

\bibitem[EPW06]{EPW}
Matthew Emerton, Robert Pollack, and Tom Weston, \emph{Variation of {I}wasawa
  invariants in {H}ida families}, Invent. Math. \textbf{163} (2006), no.~3,
  523--580. \MR{2207234 (2007a:11059)}

\bibitem[Gro87]{Gross-Special-Values}
Benedict~H. Gross, \emph{Heights and the special values of {$L$}-series},
  Number theory ({M}ontreal, {Q}ue., 1985), CMS Conf. Proc., vol.~7, Amer.
  Math. Soc., Providence, RI, 1987, pp.~115--187. \MR{894322 (89c:11082)}

\bibitem[GS93]{GS}
Ralph Greenberg and Glenn Stevens, \emph{{$p$}-adic {$L$}-functions and
  {$p$}-adic periods of modular forms}, Invent. Math. \textbf{111} (1993),
  no.~2, 407--447. \MR{1198816 (93m:11054)}

\bibitem[Hid86]{hida86b}
Haruzo Hida, \emph{Galois representations into {${\rm GL}_2({\bf Z}_p[[X]])$}
  attached to ordinary cusp forms}, Invent. Math. \textbf{85} (1986), no.~3,
  545--613. \MR{848685 (87k:11049)}

\bibitem[How07]{howard-invmath}
Benjamin Howard, \emph{Variation of {H}eegner points in {H}ida families},
  Invent. Math. \textbf{167} (2007), no.~1, 91--128. \MR{2264805 (2007h:11067)}

\bibitem[How09]{Ho}
\bysame, \emph{Twisted {G}ross-{Z}agier theorems}, Canad. J. Math. \textbf{61}
  (2009), no.~4, 828--887. \MR{2541387 (2010k:11098)}

\bibitem[LV11]{LV-MM}
Matteo Longo and Stefano Vigni, \emph{Quaternion algebras, {H}eegner points and
  the arithmetic of {H}ida families}, Manuscripta Math. \textbf{135} (2011),
  no.~3-4, 273--328. \MR{2813438 (2012g:11114)}

\bibitem[LV12]{LV-IJNT}
\bysame, \emph{A note on control theorems for quaternionic {H}ida families of
  modular forms}, Int. J. Number Theory \textbf{8} (2012), no.~6, 1425--1462.
  \MR{2965758}

\bibitem[LV14]{LV-Pisa}
\bysame, \emph{Vanishing of central values and central derivatives in {H}ida
  families}, Annali SNS Pisa (2014), no.~13, 859--888.

\bibitem[Mat89]{Mat}
Hideyuki Matsumura, \emph{Commutative ring theory}, second ed., Cambridge
  Studies in Advanced Mathematics, vol.~8, Cambridge University Press,
  Cambridge, 1989, Translated from the Japanese by M. Reid. \MR{1011461
  (90i:13001)}

\bibitem[SW99]{SW}
C.~M. Skinner and A.~J. Wiles, \emph{Residually reducible representations and
  modular forms}, Inst. Hautes \'Etudes Sci. Publ. Math. (1999), no.~89, 5--126
  (2000). \MR{1793414 (2002b:11072)}

\end{thebibliography}
\end{document}